\theoremstyle{plain}
\newtheorem{theorem}{Theorem}[section]
\newtheorem{corollary}[theorem]{Corollary}
\newtheorem{lemma}[theorem]{Lemma}
\newtheorem{proposition}[theorem]{Proposition}
\theoremstyle{definition}
\newtheorem{definition}[theorem]{Definition}
\newtheorem{question}[theorem]{Question}
\theoremstyle{remark}
\newtheorem{remark}[theorem]{Remark}
\newcommand{\td}[1]{\tilde{#1}}
\newcommand{\into}{\hookrightarrow}
\newcommand{\Z}{\mathbb{Z}}
\newcommand{\Q}{\mathbb{Q}}
\newcommand{\R}{\mathbb{R}}
\newcommand{\bd}{\partial}
\renewcommand{\H}{\mathbb H}
\newcommand{\mc}[1]{\mathcal{#1}}
\newcommand{\dlim}{\varinjlim}
\newcommand{\codim}{\text{codim}}
\newcommand{\mf}{\mathfrak}
\renewcommand{\i}{\mathfrak i}
\begin{document}

\title{Intersection homology K\"unneth theorems}
\author{Greg Friedman\\Texas Christian University}
\date{July 1, 2008}

\maketitle

\textbf{2000 Mathematics Subject Classification:} Primary:  55N33,  55U25;
Secondary:   57N80

\textbf{Keywords:} intersection homology, K\"unneth theorem, pseudomanifold

\begin{abstract}
Cohen, Goresky and Ji showed that there is a
K\"unneth theorem relating the intersection homology groups $I^{\bar p}H_*(X\times Y)$ to $I^{\bar p}H_*(X)$ and $I^{\bar p}H_*(Y)$, provided
that the perversity $\bar p$ satisfies rather strict conditions.  We consider
biperversities and prove that there is a K\"unneth theorem relating
$I^{\bar p,\bar q}H_*(X\times Y)$ to $I^{\bar p}H_*(X)$ and $I^{\bar q}H_*(Y)$ for all choices of $\bar p$ and $\bar q$. Furthermore, we prove that the K\"unneth theorem
still holds when the biperversity $p,q$ is ``loosened'' a little,
and using this we recover the K\"unneth theorem of Cohen-Goresky-Ji.
\end{abstract}

\section{Introduction}

Our goal in this paper is to study K\"unneth theorems for intersection homology. Intersection homology was developed by Goresky and MacPherson \cite{GM1} in the late 1970s for the purpose of studying stratified spaces. These are spaces more general than manifolds -- points might not possess euclidean neighborhoods -- but they are not ``too wild.'' In particular, stratified spaces are composed of layers of manifolds of various dimension, patched together in a well-controlled way. A precise definition of \emph{stratified pseudomanifolds} is given in the next section, but suffice to say that such spaces are abundant in nature, including large classes of algebraic and analytic varieties and quotients of manifolds by smooth group actions. 

Intersection homology has turned out to be a remarkably successful tool in studying such spaces. The original key result of Goresky and MacPherson was that intersection homology possesses a form of Poincar\'e duality over field coefficients, and thus one obtains for stratified spaces signature invariants, L-classes, etc. 
More wonderful results followed, including versions for singular varieties of the K\"ahler package  (hard Lefschetz, Lefschetz hyperplane, Hodge duality), substituting intersection homology for the ordinary homology appearing in the K\"ahler package for nonsingular varieties. Furthermore, it was not long before intersection homology was branching into other areas of mathematics, for example playing a key role in the proof of the Kazhdan-Lusztig conjecture in representation theory. Good surveys of these developments include \cite{KirWoo, Klei, Bo94}.

Despite these vast successes, some of the topological underpinnings of intersection homology remain somewhat mysterious. In its initial conception, the intersection homology groups were defined via a chain subcomplex of the the usual geometric chain complex on a space\footnote{I will be deliberately vague about categories of spaces and types of simplices (PL vs. singular) in this introduction.}. The idea is that one first assigns a set of \emph{perversity parameters} $\bar p$ to a stratified space;  our space is made up of layers of manifolds of various dimensions, and $\bar p$ assigns a number to each layer. The intersection chain complex $I^{\bar p}C_*(X)$ is then defined from the chain complex $C_*(X)$ by allowing only the chains whose intersection with each layer $X^i$ is not too great, as measured by the dimension $i$ and the perversity $\bar p$. If one chooses the perversities $\bar p$ to satisfy an appropriate set of restrictions, then the resulting homology groups have the desired properties mentioned above. 

Unfortunately, however, there are some drawbacks to this construction. For one thing, intersection homology is not a homology theory in the traditional sense: it does not satisfy the Eilenberg-Steenrod axioms as it is not a homotopy invariant (though it is  a stratum-preserving homotopy invariant, under the correct notion of stratum-preserving homotopy \cite{GBF3}). In fact, it was a nontrivial early result \cite{GM2} that intersection homology is a homeomorphism invariant, meaning, in particular, that the choice of stratification is irrelevant. Also, there have been issues  about the proper categorical framework for intersection homology. Some of these foundational issues were further, depending on your point of view, either obfuscated or alleviated by an early paradigm shift that largely replaced this chain theoretic approach to intersection homology with a point of view located in the derived category of sheaf complexes. This shift made it possible to bring to bear on the field some important heavy machinery, and it is this approach that has been so hugely successful for many of the advances already mentioned.

On the other hand, the success of the sheaf approach has led to the comparative neglect of the more geometric chain formulation of intersection homology. Nonetheless, there are further results to be uncovered at the topological foundations of the subject, including results not clearly obtainable from a purely sheaf theoretic perspective (at least not from one that neglects chains altogether, such as the Deligne sheaf formulation). In \cite{GBF17}, we were able to use a combination of chain and sheaf methods to extend Poincar\'e duality to homotopically stratified spaces (which we shall not discuss in detail here) and in \cite{GBF18}, we initiated a study of the algebraic structures of the intersection pairing of intersection chains on piecewise linear pseudomanifolds. The latter paper, as well as the present one, are parts of an ongoing collaboration with James McClure and Scott O. Wilson to study these algebraic structures.

\medskip

The present work concerns the K\"unneth property for intersection homology. 
Intersection homology does not possess a K\"unneth theorem in complete generality in the sense that $I^{\bar p}H_*(X\times Y)\cong H_*(I^{\bar p}C_*(X)\otimes I^{\bar p}C_*(Y))$ for any perversity $\bar p$ and pseudomanifolds $X,Y$, though K\"unneth properties do hold in certain situations. Special cases have been proven by Cheeger \cite{Chee80}, Goresky-MacPherson \cite{GM1, GM2},  Siegel \cite{Si83}, King \cite{Ki}, and Cohen, Goresky, and Ji \cite{CGJ}, culminating in two essentially distinct results. The first, proved in the greatest generality by King \cite{Ki}, is the fact that 
$I^{\bar p}H_*(M\times X)\cong H_*(C_*(M)\otimes I^{\bar p}C_*(X))$ when $X$ is a pseudomanifold and $M$ is a  manifold.

The other most general result in this area appears in Cohen, Goresky, and Ji \cite{CGJ}. Along with providing counterexamples to the existence of a general K\"unneth theorem for a single perversity, they show that   $ I^{\bar p}H_*(X\times Y;R)\cong H_*(I^{\bar p}C_*(X;R)\otimes I^{\bar p}C_*(Y;R))$ for pseudomanifolds $X$ and $Y$ and a principal ideal domain $R$, provided either that $\bar p(a)+\bar p(b)\leq \bar p(a+b)\leq \bar p(a)+\bar p(b)+1$ for all $a$ and $b$ \emph{or} that $\bar p(a)+\bar p(b)\leq \bar p(a+b)\leq \bar p(a)+\bar p(b)+2$ for all $a$ and $b$  and either $X$ or $Y$ is \emph{locally $\bar p$-torsion free over $R$}. This last condition ensures the vanishing of torsion in certain local intersection homology groups.

Observe that all of these past results constrain themselves to a single perversity $\bar p$. 

We proceed somewhat in the opposite direction of the K\"unneth-type results stated above. We answer the following question posed by James McClure: Given PL pseudomanifolds $X$ and $Y$ and perversities $\bar p$ and $\bar q$, is there a chain complex defined geometrically on $X\times Y$ whose homology is isomorphic to that of $I^{\bar p}C_*(X)\otimes I^{\bar q}C_*(Y)$? 
This question is motivated by the desire to find a cup product for intersection cohomology with field coefficients suitably dual to the Goresky-MacPherson intersection pairing $I^{\bar p}H_*(X)\otimes I^{\bar q}H_*(X)\to I^{\bar r}H_*(X)$, where $\bar p(k)+\bar q(k)\leq \bar r(k)$ for all $k$. Equivalently, we would like to find a ``diagonal map'' of the form $IH_*(X)\to IH_*(X)\otimes IH_*(X)$ on intersection homology with appropriate perversities (at least with field coefficients) whose dual would constitute a cup product.  However, the Alexander-Whitney map is unavailable in this context because it
does not preserve the admissibility conditions for intersection chains. An alternative approach \emph{in ordinary homology} is to define the diagonal map (with field coefficients)
as the composite
\[
H_*(X)
\to
H_*(X\times X)
\stackrel{\cong}{\leftarrow}
H_*(X)\otimes H_*(X),
\]
where the first map is induced by the geometric diagonal inclusion map and the second is the Eilenberg-Zilber shuffle product,
which is an isomorphism by the K\"unneth theorem (note that the shuffle product
should have better geometric properties than the Alexander-Whitney map because
it is really just Cartesian product).  This suggests the problem of doing
something similar in intersection homology, and the first step is a suitable
K\"unneth theorem. 

In the following, we show that, in fact, there are several choices of ``biperversities'' associated to the product bifiltration of $X\times Y$ that yield the desired results. The precise statement can be found in Theorem \ref{T: Kunneth} on page \pageref{T: Kunneth}, below; see also Corollary \ref{C: JH}, which contains an important special case that may be easier to absorb on a first pass. The local calculations employed in our proof are similar to those employed by Cohen, Goresky, and Ji in \cite{CGJ}. However, we observe that while their  theorem  seems to indicate that the K\"unneth theorem
for intersection homology can only be expected to work in special cases, with a
rather strong hypothesis on the allowed perversities,  the natural generalizations we employ, extending filtrations to bifiltrations and single perversities to pairs of perversities, yields a theory  (in fact several theories) that hold in complete generality, requiring no extra conditions on the perversities $\bar p,\bar q$ or on the spaces involved. Both the Cohen-Goresky-Ji result and the K\"unneth theorem for which one term is a manifold appear as corollaries to our Theorem \ref{T: Kunneth} (though we do use in the proof the special case in which the manifold is $\R^n$).

To introduce the idea of our main theorem, Theorem \ref{T: Kunneth}, we present here a special case  that is simpler to state than the full the result.

\begin{corollary}\label{C: JH}
Let $X^m,Y^n$ be stratified pseudomanifolds and $\bar p,\bar q$ traditional perversities. If we let $I^{\bar p,\bar q}H^c_*(X\times Y)$ be a singular homology theory defined just as ordinary intersection homology but using the perversity $\bar p(k)+\bar q(l)$ on the product stratum $X_{m-k}\times Y_{n-l}$, then  
 $I^{\bar p,\bar q}H_i(X\times Y)$ is isomorphic to $$H_i(I^{\bar p}C^c_*(X)\otimes I^{\bar q}C^c_*(Y)) \cong \bigoplus_{j+k=i}I^{\bar p}H^c_j(X)\otimes I^{\bar q}H^c_k(Y)\oplus \bigoplus_{j+k=i-1}I^{\bar p}H^c_j(X;R)* I^{\bar q}H^c_k(Y) .$$ 
\end{corollary}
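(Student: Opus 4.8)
The plan is to obtain this as a direct consequence of Theorem~\ref{T: Kunneth} together with the ordinary algebraic K\"unneth theorem over the ground ring. The first step is essentially bookkeeping. A singular simplex in $X\times Y$ meets the product stratum $X_{m-k}\times Y_{n-l}$, which carries ``bicodimension'' $(k,l)$ in the product bifiltration, and the allowability condition governed by $\bar p(k)+\bar q(l)$ in the statement of the corollary is precisely allowability with respect to the biperversity on $X\times Y$ sending $(k,l)\mapsto\bar p(k)+\bar q(l)$. Hence $I^{\bar p,\bar q}H^c_*(X\times Y)$ is the homology of the biperversity intersection chain complex $I^{\bar p,\bar q}C^c_*(X\times Y)$ studied in the body of the paper, and this is the complex to which Theorem~\ref{T: Kunneth} applies. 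The sum biperversity $(k,l)\mapsto\bar p(k)+\bar q(l)$ is the ``un-loosened'' case, which the theorem handles for every choice of $\bar p$ and $\bar q$, so no range restriction needs to be imposed.

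Invoking Theorem~\ref{T: Kunneth} then produces a cross-product (Eilenberg-Zilber type) chain map
\[
I^{\bar p}C^c_*(X)\otimes I^{\bar q}C^c_*(Y)\ \longrightarrow\ I^{\bar p,\bar q}C^c_*(X\times Y)
\]
that is a quasi-isomorphism, which upon passage to homology yields the first asserted isomorphism $I^{\bar p,\bar q}H_i(X\times Y)\cong H_i\bigl(I^{\bar p}C^c_*(X)\otimes I^{\bar q}C^c_*(Y)\bigr)$.

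For the second isomorphism I would apply the algebraic K\"unneth theorem over the principal ideal domain $R$. Since a submodule of a free module over a PID is free, each group $I^{\bar p}C^c_n(X)$, being an $R$-submodule of the free module on the singular simplices of $X$, is free over $R$, and likewise for $Y$; thus we are tensoring two chain complexes of free $R$-modules. The algebraic K\"unneth theorem then gives the natural short exact sequence with left-hand term $\bigoplus_{j+k=i}I^{\bar p}H^c_j(X)\otimes I^{\bar q}H^c_k(Y)$, middle term $H_i\bigl(I^{\bar p}C^c_*(X)\otimes I^{\bar q}C^c_*(Y)\bigr)$, and right-hand term $\bigoplus_{j+k=i-1}I^{\bar p}H^c_j(X)*I^{\bar q}H^c_k(Y)$, and this sequence splits, non-canonically, by freeness. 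Composing with the isomorphism of the previous paragraph produces the displayed chain of isomorphisms.

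All of the genuine content sits in Theorem~\ref{T: Kunneth}; granted that result, the corollary is only a matter of recognizing its perversity hypothesis as a biperversity condition on the product bifiltration and noting that the relevant biperversity is the basic one covered unconditionally. I therefore do not expect a substantive obstacle here. The only points that require care are notational: aligning the corollary's indexing of strata by pairs $(k,l)$ with the product-bifiltration and biperversity conventions established for Theorem~\ref{T: Kunneth}, and confirming that the cross-product map of that theorem restricts exactly to the chain complexes defined with the perversity $\bar p(k)+\bar q(l)$ written in the corollary, rather than to one of its loosenings.
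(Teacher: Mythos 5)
Your proposal is correct and takes essentially the same approach as the paper: the paper dispatches Corollary~\ref{C: JH} in a one-line remark observing that it is precisely Theorem~\ref{T: Kunneth} with the choice $Q(k,l)=\bar p(k)+\bar q(l)$ (case~(2a)), combined with the algebraic K\"unneth theorem, which is exactly what you do. The only thing worth flagging is that you describe the theorem as applying ``for every choice of $\bar p$ and $\bar q$''; strictly speaking the theorem's standing hypothesis $\bar p(k)\le k-2$ for $k>0$ must hold, but this is automatic for traditional perversities on pseudomanifolds (where the codimension-one stratum is empty), so your argument is sound.
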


It is interesting to observe that, while we make use of some sheaf machinery, it remains unclear how one would formulate our product intersection homology theory in purely sheaf theoretic terms along the lines of the Deligne construction (how would we truncate here?). Such a formulation would be very interesting to have. 

In Section \ref{S: background}, we provide official definitions and background material. In Section \ref{S: Kunneth}, we formulate and present our main theorem, Theorem \ref{T: Kunneth}. Section \ref{S: proof} contains the proof. Section \ref{S: super} concerns how Theorem \ref{T: Kunneth} must be modified if one wishes to take into account more general perversities than the traditional ones. Finally, we provide an Appendix, which contains some technical details regarding intersection homology with these nontraditional perversities. 

We will work throughout in the setting of singular intersection homology on topological pseudomanifolds (which is equivalent to the sheaf theoretic intersection cohomology of Goresky-MacPherson - see \cite{GBF10}), but the same results can be proven in the PL category using PL chains.

\paragraph{Acknowledgment.} I thank Jim McClure for his invaluable insight in suggesting this project and all his help along the way. I also thank Scott Wilson for much helpful correspondence.

\section{Background}\label{S: background}

In this section, we recall some background material. The reader anxious to get to the main results can skip ahead, referring here for details.  The reader interested in further background on intersection homology might consult the original papers by Goresky and MacPherson \cite{GM1,GM2} and the very thorough notes of Borel, et. al. \cite{Bo}. For an alternative introduction and an overview of the applications of intersection homology to other fields of mathematics, the reader should see Kirwan and Woolf \cite{KirWoo} or Banagl \cite{BaIH}. More details on singular intersection homology can be found in \cite{Ki, GBF10}.

\paragraph{Pseudomanifolds.} Let $c(Z)$ denote the open cone on the space $Z$,
and let $c(\emptyset)$ be a point. 

A \emph{stratified paracompact Hausdorff space}
$Y$ (see \cite{GM2} or \cite{CS}) is defined
by a filtration
\begin{equation*}
Y=Y^n\supset Y^{n-1} \supset Y^{n-2}\supset \cdots \supset Y^0\supset Y^{-1}=\emptyset
\end{equation*}
such that for each point $y\in Y_i=Y^i-Y^{i-1}$, there exists a \emph{distinguished neighborhood}
$U$ of $y$ such that there is a \emph{compact} Hausdorff space $L$, a filtration  of $L$
\begin{equation*}
L=L^{n-i-1}\supset  \cdots \supset L^0\supset L^{-1}=\emptyset,
\end{equation*}
and a homeomorphism
\begin{equation*}
\phi: \R^i\times c(L)\to U
\end{equation*}
that takes $\R^i\times c(L^{j-1})$ onto $Y^{i+j}\cap U$. The subspace $Y_i=Y^i-Y^{i-1}$ is called the $i$th \emph{stratum}, and, in particular, it is a (possibly empty) $i$-manifold. $L$ is called a \emph{link}.

A stratified \emph{(topological) pseudomanifold} of dimension $n$ is a stratified paracompact Hausdorff space $X$  such that $X^{n-1}=X^{n-2}$,  $X-X^{n-2}$ is a manifold of dimension $n$ dense in $X$, and each link $L$ is, inductively, a stratified pseudomanifold. A space is a (topological) pseudomanifold if it can be given the structure of a stratified pseudomanifold for some choice of filtration. Intersection homology is known to be a topological invariant of such spaces; in particular, it is invariant under choice of  stratification (see \cite{GM2}, \cite{Bo}, \cite{Ki}). Examples of pseudomanifolds include complex algebraic and analytic varieties.

We refer to the link $L$ in the neighborhood $U$ of $y$ as the link of $y$ or of the component of the stratum containing $y$; it is, in general, not uniquely determined up to homeomorphism, though if $X$ is a pseudomanifold it is unique up to, for example, stratum preserving homotopy equivalence (see, e.g., \cite{GBF13}), which is sufficient for the intersection homology type of the link of a stratum component to be determined uniquely. Thus there is no harm, in general, of referring to ``the link'' of a stratum component instead of ``a link'' of  a stratum component. 

The following (well-known) lemma will be useful.

\begin{lemma}\label{L: link link}
Let $X$ be a pseudomanifold, and let $L=L^{k-1}$ be the link of $x\in X_{n-k}$. Let $\mf L$ be the link of a point $y\in L_{k-1-u}$. Then $\mf L$ is a link of the codimension $u$ stratum of $X$. 
\end{lemma}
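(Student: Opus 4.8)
The plan is to write down an explicit distinguished neighborhood of a conveniently chosen point $w$ in the codimension $u$ stratum of $X$ and to identify its link with $\mf L$, simply by composing the local homeomorphisms supplied by the two given distinguished neighborhoods and tracking the filtrations through the composite.

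\textbf{Set-up and choice of point.} Fix a distinguished neighborhood $\phi\colon\R^{n-k}\times c(L)\to U$ of $x$, so that $\phi$ carries $\R^{n-k}\times c(L^{j-1})$ onto $X^{n-k+j}\cap U$, and a distinguished neighborhood $\psi\colon\R^{k-1-u}\times c(\mf L)\to V\subseteq L$ of $y$, carrying $\R^{k-1-u}\times c(\mf L^{j-1})$ onto $L^{k-1-u+j}\cap V$; here $\mf L=\mf L^{u-1}$ is compact. Let $q\colon[0,1)\times L\to c(L)$ be the cone quotient map and set $w=\phi(0,q(1/2,y))$, a point lying over $y$ on the open part of the cone. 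Since $y\in L^{k-1-u}\setminus L^{k-2-u}$ and $q$ is injective away from $\{0\}\times L$, we get $q(1/2,y)\in c(L^{k-1-u})\setminus c(L^{k-2-u})$, hence $w\in(X^{n-u}\setminus X^{n-u-1})\cap U$; that is, $w$ lies in the codimension $u$ stratum of $X$.

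\textbf{Building the neighborhood and checking the filtration.} Because $q(1/2,y)$ is not the cone point, $(t,z)\mapsto q(t,z)$ is a homeomorphism of $(1/4,3/4)\times V$ onto an open neighborhood $W$ of $q(1/2,y)$ in $c(L)$, and it carries $(1/4,3/4)\times(L^{i-1}\cap V)$ onto $c(L^{i-1})\cap W$ for every $i$. Composing this with $\psi$ in the last two coordinates and then applying $\phi$ gives a homeomorphism
\[
\Phi\colon\R^{n-k}\times(1/4,3/4)\times\R^{k-1-u}\times c(\mf L)\;\longrightarrow\;\phi\big(\R^{n-k}\times W\big),
\]
an open neighborhood of $w$ in $X$, and $\R^{n-k}\times(1/4,3/4)\times\R^{k-1-u}\cong\R^{n-u}$. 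To verify the filtration condition, follow $\R^{n-k}\times(1/4,3/4)\times\R^{k-1-u}\times c(\mf L^{j-1})$ through the three maps: $\psi$ sends it into $\R^{n-k}\times(1/4,3/4)\times(L^{k-1-u+j}\cap V)$; writing $i=k-u+j$, so $i-1=k-1-u+j$, the cone homeomorphism then sends it into $\R^{n-k}\times\big(c(L^{i-1})\cap W\big)$; and $\phi$ sends it into $X^{n-k+i}=X^{n-u+j}$. Thus $\Phi$ carries $\R^{n-u}\times c(\mf L^{j-1})$ onto $X^{n-u+j}\cap\phi(\R^{n-k}\times W)$, which is exactly the defining property of a distinguished neighborhood of $w$ with link $\mf L$. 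Since $X$ is a pseudomanifold, $L$ is inductively a stratified pseudomanifold, hence so is its link $\mf L$; as $\mf L$ is moreover compact, it is a link of the codimension $u$ stratum of $X$, as claimed.

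\textbf{Main obstacle.} There is no real obstacle: the argument is entirely cone-index bookkeeping. The only things to watch are the degenerate case $u=0$ (where $\mf L=\emptyset$, $c(\mf L)$ is a point, and the strata involved are the top manifold strata, so everything remains formally correct) and the observation that $u=1$ cannot occur, since neither a pseudomanifold nor a link of a pseudomanifold has a codimension-one stratum. No appeal to stratum-preserving equivalence is needed, though one could invoke it freely since links are in any case only well-defined up to such equivalence.
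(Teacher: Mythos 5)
Your proof is correct and takes essentially the same approach as the paper: compose the two distinguished neighborhoods by identifying $L$ with a fixed-height slice of the open cone, then track the filtration indices through the composite. The paper's version leaves the filtration compatibility as ``clear,'' whereas you spell out the cone-coordinate bookkeeping explicitly and also flag the degenerate cases $u=0,1$; these are worthwhile clarifications but do not change the substance of the argument.
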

\begin{proof}
By assumption, $x\in X_{n-k}$ has a distinguished neighborhood of the form $N\cong\R^{n-k}\times cL$, and, furthermore, $N\cap X_{n-u}\cong \R^{n-k}\times L_{k-1-u}\times (0,1)\subset \R^{n-k}\times cL_{k-1-u}$. So now let $y\in L_{k-1-u}$ with a neighborhood in $L$ of the form $\R^{k-1-u}\times c\mf L$. But  now we can identify $L$ with a particular factor of $L$ in the product $\R^{n-k}\times L \times (0,1)\subset \R^{n-k}\times cL\cong N$. Then $y\in X_{n-u}$ with a neighborhood of the form $\R^{n-k}\times \R^{k-1-u}\times c\mf L \times (0,1)\cong \R^{n-u}\times c\mf L$. The compatibility of the stratifications is clear, so $\mf L$ is a link of $y$ in $X$.  
\end{proof}

\paragraph{Intersection homology.}

In this section, we provide a quick review of the definition of intersection homology. For more details, the reader is urged to consult King \cite{Ki} and the author \cite{GBF10} for singular intersection homology and the original papers of Goresky and MacPherson \cite{GM1,GM2} and the book of Borel \cite{Bo} for the simplicial and sheaf definitions. Singular chain intersection homology theory was introduced in \cite{Ki} with finite chains (compact supports) and generalized in \cite{GBF10} to include locally-finite but infinite chains (closed supports). 

We recall that singular  intersection homology  can be defined on any filtered space 

$$X=X^n\supset X^{n-1}\supset \cdots \supset X^0\supset X^{-1}=\emptyset.$$
In general, the superscript ``dimensions'' are simply labels and do not necessarily reflect any geometric notions of dimension. We refer to $n$ as the \emph{filtered dimension} of $X$, or simply as the ``dimension'' when no confusion should arise. 
The set $X^i$ is called the $i$th \emph{skeleton} of $X$, and $X_i=X^i-X^{i-1}$ is the $i$th \emph{stratum}. Of course when $X$ is a pseudomanifold, the index $i$ will represent the  dimension of the stratum in the usual sense.

A \emph{perversity} $\bar p$ is a function $\bar p: \Z^{\geq 0}\to \Z$ such that $\bar p(k)\leq \bar p(k+1)\leq \bar p(k)+1$. A \emph{traditional perversity} also satisfies $\bar p(0)=\bar p (1)=\bar p(2)=0$; in particular, for $k\geq 2$, $\bar p(k)\leq k-2$. One generally must restrict to traditional perversities in order to obtain the most important topological invariance and Poincar\'e duality results for intersection homology (see \cite{GM2, Bo, Ki, Q2}), although many interesting results are now also known for superperversities, which satisfy $\bar p(2)>0$ (see \cite{CS, HS91, GBF10, GBF11, Sa05}). King \cite{Ki} also considers \emph{loose perversities}, which are completely arbitrary functions $\bar p: \Z^{\geq 0}\to \Z$, and one can generalize these even further to be functions $\bar p: \{\text{connected components of strata of $X$} \}\to \Z$. We will consider loose perversities here as well, though we do impose the one condition $\bar p(0)=0$ (or $\bar p(U)=0$ if $U$ is a component of the top stratum).
The reasons for this are that, on the one hand, if we allowed $\bar p(0)<0$, chains could not intersect the top dense stratum, which is a degenerate situation it is reasonable to avoid, while there is no loss of generality in reducing $\bar p(0)\geq 0$ to $\bar p(0)=0$ (see the definition of intersection homology, below). Making $\bar p(0)=0$ a blanket assumption now saves us from having to make some later statements unnecessarily complicated.

Given $\bar p$ and $X$ and a coefficient ring $R$, one defines the \emph{intersection chain complex} $I^{\bar p}C^c_*(X;R)$ as a subcomplex of $C^c_*(X;R)$, the complex of compactly supported singular chains\footnote{This is the usual chain complex consisting of finite linear combination of singular simplices, but we emphasize the compact supports in the notation to distinguish $C^c_*(X)$ from $C^{\infty}_*(X)$, which we shall also use.} on $X$, as follows: A singular $i$-simplex $\sigma:\Delta^i\to X$ is \emph{allowable} if $$\sigma^{-1}(X^{n-k}-X^{n-k-1})\subset \{i-k+\bar p(k) \text{ skeleton of } \Delta^i\}.$$ The chain $\xi\in C^c_i(X;R)$ is allowable if each simplex in $\xi$ and $\bd \xi$ is allowable. $I^{\bar p}C_*^c(X;R)$ is the complex of allowable chains. $I^{\bar p}C_*^{\infty}(X;R)$ is defined similarly as the complex of allowable chains in $C_*^{\infty}(X;R)$, the complex of locally-finite singular chains. Chains in $C_*^{\infty}(X;R)$ may be composed of an infinite number of simplices (with their coefficients), but for each such chain $\xi$, each point in $X$ must have a neighborhood that intersects only a finite number of simplices (with non-zero coefficients) in $\xi$.  $I^{\bar p}C_*^{\infty}(X;R)$ is referred to as the complex of intersection chains with \emph{closed supports}, or sometimes as \emph{Borel-Moore} intersection chains. See \cite{GBF10} for more details. 

The associated homology theories are denoted $I^{\bar p}H^c_*(X;R)$ and $I^{\bar p}H^{\infty}_*(X;R)$ and called \emph{intersection homology} with, respectively, compact or closed supports. We will sometimes omit the decorations $c$ or $\infty$ if these theories are equivalent, e.g. if $X$ is compact, or for statements that apply equally well in either context. We will occasionally omit explicit reference to $\bar p$ in statements that hold for any fixed perversity. We also often leave the coefficient $R$ tacit.

Relative intersection homology is defined similarly, though we note that 
\begin{enumerate}
\item the filtration on the subspace will always be that inherited from the larger space by restriction, and
\item  in the closed support case, all chains are required to be locally-finite in the larger space. 
\end{enumerate}

If $(X,A)$ is such a filtered space pair, we use the notation $IC_*^{\infty}(A_X)$ to denote the allowable singular chains supported in $A$ that are locally-finite in $X$. The homology of this complex is $IH_*^{\infty}(A_X)$. Note that in the compact support case, the local-finiteness condition is satisfied automatically so we do not need this notation and may unambiguously refer to $IH_*^c(A)$. The injection  $0\to IC_*^{\infty}(A_X)\to IC_*^{\infty}(X)$ yields a quotient complex $IC_*^{\infty}(X,A)$ and a long exact sequence of intersection homology groups $\to IH_i^{\infty}(A_X)\to IH_i^{\infty}(X)\to IH_i^{\infty}(X,A)\to$.

The crucial local property of intersection homology, which we will use below, are the following formulas; see \cite{Ki,GBF10} for proofs.

\begin{proposition}\label{P: cone}
Let $L$ be an $n-1$ dimensional filtered space with coefficient ring $R$, and let $\bar p$ be a (possibly loose) perversity such that $\bar p(n)\leq n-2$. Then \begin{equation*}
I^{\bar p}H^{c}_i(cL;R) \cong
\begin{cases}
I^{\bar p}H^{c}_{i}(L;R), & i<n-1-\bar p(n),\\
0, & i\geq n-1-\bar p(n).
\end{cases}
\end{equation*}
If $L$ is compact, then 
\begin{equation*}
I^{\bar p}H^c_i(cL,L\times \R;R)\cong I^{\bar p}H^{\infty}_i(cL;R) \cong
\begin{cases}
I^{\bar p}H_{i-1}(L;R), & i\geq n-\bar p(n),\\
0, & i<n-\bar p(n).
\end{cases}
\end{equation*}
\end{proposition}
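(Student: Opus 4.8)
The plan is to compute $I^{\bar p}H_*^c(cL;R)$ directly by exhibiting an explicit chain homotopy (a "cone construction") and then derive the closed-support statement from the compact-support one via the long exact sequence of the pair $(cL, L\times\R)$. First I would set up coordinates: write $cL = L\times[0,1)/(L\times\{0\})$, with cone point $v$, and note that the filtration on $cL$ has strata $\{v\}$ (the cone point, which lies in $X_0$ of $cL$, so in codimension $n$) together with $L_{j}\times(0,1)$ sitting in $(cL)_{j+1}$. A singular simplex $\sigma\colon\Delta^i\to cL$ is $\bar p$-allowable exactly when it meets each stratum in a small enough skeleton; the key new constraint compared to $L$ itself is the one coming from the cone point, namely $\sigma^{-1}(v)\subset\{i-n+\bar p(n)\text{ skeleton of }\Delta^i\}$.

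The main step is the \emph{cone formula via the geometric cone operator}. Given an allowable chain $\xi$ in $cL$, I would define $c\xi$ by coning each simplex to the cone point $v$ (using the obvious parametrization that linearly interpolates the $[0,1)$-coordinate down to $0$). The essential computation is twofold: (i) checking when $c\xi$ is itself $\bar p$-allowable, which forces the degree restriction — coning raises dimension by one and puts the whole new simplex over $v$, so the cone-point constraint becomes the requirement that the relevant skeleton be the full simplex, i.e. $(i+1)-n+\bar p(n)\geq i+1$, which fails unless we are in the stated range; and (ii) the standard identity $\partial(c\xi)=\xi - c(\partial\xi)$ away from dimensional edge cases, which shows that in degrees $i\geq n-1-\bar p(n)$ every allowable cycle bounds (cone it off), giving vanishing, while in degrees $i<n-1-\bar p(n)$ one checks that the inclusion $L=L\times\{1/2\}\hookrightarrow cL$ is an allowable chain map inducing an isomorphism on homology, with inverse up to chain homotopy given by pushing toward the cone point and using $c$ as the homotopy. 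One must be slightly careful at the boundary degree $i=n-1-\bar p(n)$ itself, and with the local-finiteness bookkeeping (here trivial, since we are in the compact-support complex), and with simplices that already touch $v$; these are the routine-but-fussy points.

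For the closed-support formula, I would use the cofibration $L\times\R\hookrightarrow cL$ (identifying the complement of the cone point, or rather an open deleted cone $L\times(0,1)\simeq L\times\R$) and the long exact sequence $\to IH_i^{\infty}(L\times\R_{cL})\to IH_i^{\infty}(cL)\to IH_i^{\infty}(cL,L\times\R)\to$. The first group is computed by a product/line calculation (allowable chains locally finite in $cL$ supported off $v$ are the same as allowable chains in $L\times\R$, and $IH^\infty_*(L\times\R)\cong IH^c_{*-1}(L)$ by a standard $\R$-factor argument — or one cites King \cite{Ki}); excision plus the already-proven compact cone formula identifies $IH_i^{\infty}(cL,L\times\R)$ with $IH_i^c(cL,L\times\R)$, which the first half shows is $0$ for $i<n-\bar p(n)$. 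Chasing the sequence then yields $IH_i^\infty(cL)\cong IH_{i-1}(L)$ for $i\geq n-\bar p(n)$ and $0$ below, and the stated isomorphism $IH_i^c(cL,L\times\R)\cong IH_i^\infty(cL)$ falls out of the same diagram. The hard part will be step (i)–(ii) above: making the cone operator interact cleanly with the allowability conditions at \emph{all} strata simultaneously (not just the cone point) and pinning down the exact degree threshold, including the delicate behavior of boundaries of already-allowable chains under coning; once that bookkeeping is done, the closed-support case is essentially formal.
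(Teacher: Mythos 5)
Note that the paper does not prove Proposition \ref{P: cone} itself: it cites \cite{Ki} and \cite{GBF10}, where the argument follows essentially the route you sketch (cone operator for the compact-support formula, long exact sequence of $(cL,L\times\R)$ for the closed-support formula), so your plan is the standard one. However, your central allowability computation is wrong in a way that would break the vanishing half of the argument. The cone $c\sigma$ of an $i$-simplex $\sigma$ missing $v$ is an $(i+1)$-simplex in which only the new apex vertex maps to $v$; it does \emph{not} ``put the whole new simplex over $v$.'' Hence $(c\sigma)^{-1}(v)$ lies in the $0$-skeleton of $\Delta^{i+1}$, and the cone-point allowability condition is $0\leq (i+1)-n+\bar p(n)$, i.e.\ $i\geq n-1-\bar p(n)$, which is exactly the vanishing range. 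The inequality you wrote, $(i+1)-n+\bar p(n)\geq i+1$, reduces to $\bar p(n)\geq n$, which never holds under the standing hypothesis $\bar p(n)\leq n-2$; taken at face value it would say that the cone-off is never allowable, so the vanishing could not be proved this way. The fix is easy, but as written the computation does not give the theorem. (One must also actually check allowability at the strata $L_j\times(0,1)$: there coning adds one both to $i$ and to the skeletal dimension of $\sigma^{-1}(L_j\times(0,1))$, so allowability of $\sigma$ carries over verbatim and only the cone point imposes a new condition; and the hypothesis $\bar p(n)\leq n-2$ gives $n-1-\bar p(n)\geq 1$, so the degree-$0$ edge case of $\bd(c\xi)=\xi-c(\bd\xi)$ never occurs in the cone-off range.)

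A secondary issue is in the closed-support half: the assertion that ``allowable chains locally finite in $cL$ supported off $v$ are the same as allowable chains in $L\times\R$'' is false. Local finiteness in $cL$ is strictly stronger near the cone point than local finiteness in $L\times\R$ (it forbids infinitely many simplices accumulating at $v$), so $I^{\bar p}C^{\infty}_*((L\times\R)_{cL})$ is a proper subcomplex of $I^{\bar p}C^{\infty}_*(L\times\R)$, and the ``standard $\R$-factor argument'' does not apply to it directly. The intended route (cf.\ \cite[Lemma~2.12]{GBF10}) is first to show $I^{\bar p}H^{\infty}_i(cL,L\times\R)\cong I^{\bar p}H^c_i(cL,L\times\R)$ by truncating chains near the far end of the cone, then to compute the right-hand side from the compact-support long exact sequence using the first half of the proposition, and finally to compare $I^{\bar p}H^{\infty}_*(cL)$ with $I^{\bar p}H^{\infty}_*(cL,L\times\R)$.
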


\begin{remark}\label{R: strat coef}
Unfortunately, Proposition \ref{P: cone}, which is fundamental for many intersection homology computations, will no longer be true if  $\bar p(n)> n-2$ for some $n\geq 2$.
We will revisit this issue in Section \ref{S: super}, below.

\end{remark}

\section{Product intersection homology theories}\label{S: Kunneth}

We can now state our main results.

We fix loose perversities $\bar p,\bar q$ (recall that we do assume $\bar p(0)=\bar q(0)=0$). Initially, we will also require that $\bar p(k)\leq k-2$ for all $k>0$, and similarly for $\bar q$, though we will loosen this condition in Section \ref{S: super}. Our spaces will be stratified pseudomanifolds $X^m$ and $Y^n$. We also fix  a principal ideal domain $R$ and often omit it from notation. We let $\pi_1:X\times Y\to X$ and $\pi_2:X\times Y\to Y$ be the projections.

We seek to find chain complexes on $X\times Y$ whose homology groups are isomorphic to\footnote{All tensor products are over $R$, and we will use $A*B$ throughout to denote the torsion product $Tor^1_R$ of the $R$-modules $A$ and $B$.} 
$$H_i( I^{\bar p}C^c_*(X;R)\otimes I^{\bar p}C^c_*(Y;R))\cong \bigoplus_{j+k=i} I^{\bar p}H_j(X;R)\otimes I^{\bar q}H_k(Y;R)\oplus \bigoplus_{j+k=i-1} I^{\bar p}H_j(X;R)* I^{\bar q}H_k(Y;R).$$

We will look at subcomplexes of the singular chain complex $C_*(X\times Y;R)$ and apply loose perversity conditions to connected components of strata of $X\times Y$. In general, two stratum components of the same dimension need not have the same perversity conditions. 

More precisely, we define a \emph{loose product perversity} $Q$ to be a function $$Q:\Z^{\geq 0}\times \Z^{\geq 0}\to\Z.$$ We then define a singular simplex $\sigma: \Delta^i\to X\times Y$ to be $Q$-allowable if, for each stratum component $S\subset X_{m-k}\times Y_{n-l}$ of $X\times Y$, we have $\sigma^{-1}(S)\subset \{i-k-l+Q(k,l) \text{ skeleton of }\Delta^i\}$. A chain is $Q$-allowable if each of its simplices with non-zero coefficient and each of the simplices with non-zero coefficient in its boundary are $Q$-allowable. In this way, we obtain chain complexes $I^QC^c_*(X\times Y;R)$ and $I^QC^{\infty}_*(X\times Y;R)$ and associated homology theories $I^QH_*^c(X\times Y;R)$ and $I^QH_*^{\infty}(X\times Y;R)$.

In this setting, our question becomes 
\begin{question}
For what $Q$ and for what conditions on $X,Y$ is $I^QH^c_*(X\times Y;R)\cong H_i( I^{\bar p}C^c_*(X;R)\otimes I^{\bar q}C^c_*(Y;R))$ ?
\end{question}

The answer is contained in the following theorem, the proof of which is presented in the next section.

\begin{theorem}\label{T: Kunneth}
If $\bar p$ and $\bar q$ are perversities such that $\bar p(k)\leq k-2$ and $\bar q(l)\leq l-2$ for all $k,l>0$, then $I^QH_*^c(X\times Y;R)\cong H_*(I^{\bar p}C_*^c(X^m;R)\otimes I^{\bar q}C_*^c(Y^n;R))$ if the following conditions hold:
\begin{enumerate}
\item $Q(k,0)=\bar p(k)$ and $Q(0,l)=\bar q(l)$ for all $k,l$,
\item For each pair $k,l$ such that $0<k\leq m$, $0<l\leq n$, either 
\begin{enumerate}
\item \label{I: simple case} $Q(k,l)=\bar p(k)+\bar q(l)$ \emph{or},
\item  $Q(k,l)=\bar p(k)+\bar q(l)+1$ \emph{or},
\item \label{I} $Q(k,l)=\bar p(k)+\bar q(l)+2$ \emph{and} $I^{\bar p}H_{k-2-\bar p(k)}(L_1;R)*I^{\bar q}H_{l-2-\bar q(l)}(L_2;R)=0$, where $L_1, L_2$ are the links of the codimension $k,l$ strata of $X,Y$, respectively. 
\end{enumerate}
\end{enumerate}

Furthermore, if these conditions are not satisfied, then $I^QH^c_*(X\times Y;R)$ will not equal $H_*(I^{\bar p}C_*^c(X^m;R)\otimes I^{\bar q}C_*^c(Y^n;R))$ in general. 
\end{theorem}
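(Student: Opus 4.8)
The plan is to prove this by a local-to-global argument, reducing everything to the behavior on distinguished neighborhoods via a Mayer–Vietoris induction on a cover of $X\times Y$ by products of distinguished neighborhoods. The target complex $I^{\bar p}C^c_*(X)\otimes I^{\bar q}C^c_*(Y)$ and the candidate complex $I^QC^c_*(X\times Y)$ both satisfy Mayer–Vietoris and both are compatible with open restrictions and colimits (this is where singular chains, rather than PL or simplicial, make life easy). So if I can produce a natural chain map $I^{\bar p}C^c_*(X)\otimes I^{\bar q}C^c_*(Y)\to I^QC^c_*(X\times Y)$ — the obvious candidate being the Eilenberg–Zilber cross product, whose allowability with respect to $Q$ must first be checked using conditions (1) and (2) — then I can try to show it is a quasi-isomorphism by induction over the strata of $X\times Y$, the base case being when both spaces are unfiltered manifolds (where it follows from the ordinary K\"unneth theorem, together with King's theorem when one factor is a Euclidean space). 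For the positive direction, the key model computation is the cross product of two cone formulas: one needs that the cross product $I^{\bar p}C^c_*(\R^{m-k}\times cL_1)\otimes I^{\bar q}C^c_*(\R^{n-l}\times cL_2)\to I^QC^c_*(\R^{m-k}\times cL_1\times \R^{n-l}\times cL_2)$ is a quasi-isomorphism, and here the three allowed values of $Q(k,l)$ in condition (2) are exactly what's dictated by comparing Proposition~\ref{P: cone} applied to each factor with the intersection homology of the product computed via the $Q$-allowability condition on the deepest stratum $X_{m-k}\times Y_{n-l}$; in case (c) the algebraic K\"unneth theorem forces the appearance of the torsion product $I^{\bar p}H_{k-2-\bar p(k)}(L_1)*I^{\bar q}H_{l-2-\bar q(l)}(L_2)$, which must vanish for the cross product to remain a quasi-isomorphism at that level.

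For the final ``furthermore'' statement — that the conditions are also necessary — the strategy is to exhibit, for a violation of condition (1) or (2), an explicit pair of pseudomanifolds $X,Y$ for which $I^QH^c_*(X\times Y;R)\not\cong H_*(I^{\bar p}C^c_*(X;R)\otimes I^{\bar q}C^c_*(Y;R))$. The natural test spaces are cones (or iterated cones) on spheres or lens-space-like spaces, so that the intersection homology of each factor is known and the torsion products can be made nonzero. First, I would handle condition (1): if $Q(k,0)\neq\bar p(k)$ for some $k$ with $0<k\le m$, take $Y$ to be a single point (or $\R^0$), so that the right-hand side is just $I^{\bar p}H^c_*(X;R)$, while the left-hand side is $I^{Q(-,0)}H^c_*(X;R)$, which differs from $I^{\bar p}H^c_*(X;R)$ for a suitable $X$ — e.g. a cone $cL$ where $L$ has a codimension-$k$ stratum with nontrivial intersection homology in the critical degree — precisely by the cone formula, since changing $Q(k,0)$ changes the truncation degree. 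The symmetric argument handles $Q(0,l)\neq\bar q(l)$.

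Next, for condition (2): suppose $0<k\le m$, $0<l\le n$ and $Q(k,l)$ is not one of the three allowed cases. If $Q(k,l)<\bar p(k)+\bar q(l)$ or $Q(k,l)>\bar p(k)+\bar q(l)+2$, I would take $X=\R^{m-k}\times cL_1$ and $Y=\R^{n-l}\times cL_2$ (or compact versions thereof) with $L_1,L_2$ chosen so that the relevant local intersection homology groups are nonzero, and compare: the right-hand side is computed from the two cone formulas and is supported in a range governed by $\bar p(k)$ and $\bar q(l)$, whereas the left-hand side is an intersection homology group of a space whose deepest stratum carries the perversity value $Q(k,l)$, and the relevant cone-type formula for $I^QH^c_*$ on $cL_1\times cL_2$ (which one computes directly, as in \cite{CGJ}) produces a different truncation degree, so the groups disagree in some degree. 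The remaining case is $Q(k,l)=\bar p(k)+\bar q(l)+2$ but $I^{\bar p}H_{k-2-\bar p(k)}(L_1;R)*I^{\bar q}H_{l-2-\bar q(l)}(L_2;R)\neq 0$: here I pick $L_1,L_2$ realizing a nonzero torsion product in those degrees (lens spaces over $\Z$ are the standard choice), and then the algebraic K\"unneth formula for the right-hand side contains this nonzero torsion summand in a degree where the left-hand side — again computed from the $Q$-cone formula with the shifted perversity — vanishes. The main obstacle throughout is the local computation of $I^QH^c_*$ of the product of two cones for a general biperversity value; once that ``product cone formula'' is in hand (essentially the computation of \cite{CGJ} adapted to the two-variable setting), both the sufficiency induction and the necessity counterexamples fall out by matching degrees and torsion, so I would prioritize establishing that lemma cleanly first.
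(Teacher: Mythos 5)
Your plan matches the paper's proof in its essentials: use the cross product as the comparison map, check its $Q$-allowability, reduce to local stalk computations by induction on depth and codimension, compute the product-of-cones as the key local lemma (the join decomposition and the gap in the resulting formula are exactly what produces the three cases for $Q(k,l)$), and read necessity off the same local computations. Two steps that the paper handles carefully are passed over in your sketch, and you should not underestimate them. First, to apply the algebraic K\"unneth theorem to $I^{\bar p}C^c_*(X)\otimes I^{\bar q}C^c_*(Y)$ (both globally and in the local stalk calculations where you tensor two relative cone complexes) you need one of the factors to be flat over $R$; this is the content of the paper's Lemma~\ref{L: flat}, proved by showing $I^{\bar p}C^c_*(X,X-\bar U)$ embeds in the free module $C^c_*(X,X-\bar U)$. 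Without this, ``$H_*(\text{tensor})$ splits as tensor plus torsion'' is not available, and the right-hand side doesn't admit the description you're comparing against. Second, the global assembly is more delicate than ``Mayer--Vietoris induction on a cover'': the paper runs it through the sheaf of intersection chains, its homotopical fineness, and Borel's derived tensor K\"unneth theorem (\cite[V.10.19]{Bo}), which handles non-compact $X,Y$ and infinite covers uniformly. A purely chain-level MV argument can likely be made to work but would need a colimit/spectral-sequence argument to control infinite covers, and the naturality of the cross product with respect to MV connecting maps must be checked. Finally, your phrase ``allowability with respect to $Q$ must first be checked using conditions (1) and (2)'' is a slight overstatement: allowability of the cross product requires only $Q(k,l)\ge \bar p(k)+\bar q(l)$, which is implied by but weaker than conditions (1)--(2); the full force of those conditions is needed only for the local quasi-isomorphism, not for the map to be well-defined.
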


\begin{remark}
Corollary \ref{C: JH}, stated in the introduction, is what we get by limiting ourselves to possibility \eqref{I: simple case}.
\end{remark}

\begin{remark}
The theorem remains true, with the obvious modifications, if $\bar p$ and $\bar q$ are the more general types of perversities that are allowed to take different values on different connected components of strata of the same dimension, so long as $\bar p(S)\leq \codim(S)-2$ when $\codim(S)>0$ and $\bar p(S)= 0$ when $\codim(S)=0$, and similarly for $\bar q$. The generalization of the proof below is straightforward, but we avoid the notational complications that would be necessary. Also, the results of Section \ref{S: super} apply to these sorts of perversities as well. 
\end{remark}

\begin{remark} 
Recall from \cite{GS83} that a pseudomanifold $X$ is called  \emph{locally $\bar p$-torsion free} with respect to $R$ if, for each link $L$ of a stratum component of codimension $k$, the  torsion subgroup of $I^{\bar p}H_{k-2-\bar p(k)}(L)$ vanishes. We observe that the torsion product condition in case \eqref{I} of Theorem \ref{T: Kunneth} will be satisfied  if $X$ is locally $\bar p$-torsion free over $R$ or if $Y$ is locally $\bar q$-torsion free over $R$. In particular, it will be satisfied for any $X$, $Y$ if $R$ is a field. Thus the requirement on $X$ and $Y$ in   \eqref{I} is a fairly reasonable condition to consider. 
\end{remark}

The K\"unneth theorem of Cohen, Goresky, and Ji \cite{CGJ} and the K\"unneth theorem with a manifold factor  arise immediately as special cases of Theorem \ref{T: Kunneth}:
\begin{corollary}[Cohen-Goresky-Ji \cite{CGJ}]
If $\bar p$ is a traditional perversity and $ \bar p(k)+\bar p(l)\leq\bar p(k+l)\leq \bar p(k)+\bar p(l)+1$, then $I^{\bar p}H^c_*(X\times Y)\cong H_*(I^{\bar p}C_*^c(X)\otimes I^{\bar p}C_*^c(Y))$. Furthermore, with field coefficients or if either $X$ or $Y$ is locally $\bar p$-torsion free, then this condition can be weakened to $ \bar p(k)+\bar p(l)\leq\bar p(k+l)\leq \bar p(k)+\bar p(l)+2$.
\end{corollary}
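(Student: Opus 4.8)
The plan is to deduce this corollary directly from Theorem~\ref{T: Kunneth} by exhibiting an appropriate loose product perversity $Q$ and checking that the intersection chains it defines on $X\times Y$ agree with the ordinary $I^{\bar p}$-chains. First I would set $Q(k,l)=\bar p(k+l)$. Since $\bar p$ is a perversity, we have $\bar p(0)=0$, so $Q(k,0)=\bar p(k)$ and $Q(0,l)=\bar p(l)$, verifying condition (1) of Theorem~\ref{T: Kunneth}. For condition (2), observe that the hypothesis $\bar p(k)+\bar p(l)\leq\bar p(k+l)\leq\bar p(k)+\bar p(l)+1$ says exactly that $Q(k,l)-\bar p(k)-\bar q(l)\in\{0,1\}$ (here $\bar q=\bar p$), which is precisely case~(a) or case~(b); and the weaker hypothesis $\bar p(k+l)\leq\bar p(k)+\bar p(l)+2$ together with local $\bar p$-torsion-freeness of $X$ or $Y$ lands us in case~(c), since that torsion-freeness forces $I^{\bar p}H_{k-2-\bar p(k)}(L_1)*I^{\bar q}H_{l-2-\bar q(l)}(L_2)=0$ (a torsion product of a torsion-free group with anything vanishes, and over a field everything is torsion-free). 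Thus Theorem~\ref{T: Kunneth} applies and gives $I^QH^c_*(X\times Y)\cong H_*(I^{\bar p}C^c_*(X)\otimes I^{\bar p}C^c_*(Y))$.

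It remains to identify $I^QC^c_*(X\times Y)$ with $I^{\bar p}C^c_*(X\times Y)$, the ordinary single-perversity intersection chain complex of the product pseudomanifold. The key point is that the strata of $X\times Y$ are exactly the sets $X_{m-k}\times Y_{n-l}$, which have codimension $k+l$ in $X\times Y$; so a simplex $\sigma:\Delta^i\to X\times Y$ is $\bar p$-allowable in the usual sense precisely when $\sigma^{-1}(X_{m-k}\times Y_{n-l})$ lies in the $i-(k+l)+\bar p(k+l)$ skeleton of $\Delta^i$ for every such stratum. Comparing with the definition of $Q$-allowability, this is exactly the condition imposed by $Q(k,l)=\bar p(k+l)$. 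Hence the allowable chains coincide, $I^QC^c_*(X\times Y)=I^{\bar p}C^c_*(X\times Y)$, and therefore $I^{\bar p}H^c_*(X\times Y)\cong H_*(I^{\bar p}C^c_*(X)\otimes I^{\bar p}C^c_*(Y))$, which is the claim.

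I do not expect a serious obstacle here: the corollary is essentially a dictionary translation, the only genuine content being (i) that the codimension of $X_{m-k}\times Y_{n-l}$ in $X\times Y$ is additive, so that a single perversity on the product corresponds to the ``diagonal'' product perversity $Q(k,l)=\bar p(k+l)$, and (ii) that the numerical inequalities in the Cohen--Goresky--Ji hypothesis are literally the inequalities separating cases (a), (b), (c) of Theorem~\ref{T: Kunneth}. The one mild subtlety worth stating explicitly is that we need $X\times Y$, with the product stratification, to be a stratified pseudomanifold of dimension $m+n$ and that $\bar p$ restricted to the codimensions occurring in $X\times Y$ still satisfies $\bar p(k+l)\le (k+l)-2$ for $k+l>0$ (automatic for a traditional perversity), so that Proposition~\ref{P: cone} and hence Theorem~\ref{T: Kunneth} are applicable; this is where the hypothesis that $\bar p$ is traditional gets used. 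The K\"unneth theorem with a manifold factor is the further special case $Y=M$ a manifold, for which the only stratum is $M$ itself ($l=0$), so every $Q$ satisfying condition (1) works and no perversity restriction beyond $\bar p(k)\le k-2$ is needed.
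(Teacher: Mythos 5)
Your proposal is correct and follows the same route as the paper: set $Q(k,l)=\bar p(k+l)$, check the hypotheses of Theorem~\ref{T: Kunneth} (with the Cohen--Goresky--Ji inequalities corresponding exactly to cases (a), (b), (c)), and observe that since $Q$ depends only on $k+l$, the complex $I^QC^c_*(X\times Y)$ coincides with the ordinary single-perversity complex $I^{\bar p}C^c_*(X\times Y)$. The paper states this more tersely but the content is identical.
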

\begin{proof}
Take $Q(k,l)=\bar p(k+l)$ in Theorem \ref{T: Kunneth}. Note that if $X_k\times Y_l$ and $X_{k'}\times Y_{l'}$ are stratum components of the same overall codimension, i.e. $k+l=k'+l'$, then $Q(k,l)=Q(k',l')=\bar p(k,l)$, so that indeed, $I^QH_*(X\times Y)$ is just the ordinary intersection homology $I^{\bar p}H_*(X\times Y)$. 
\end{proof}

\begin{corollary}[King \cite{Ki}]
If $M$ is a manifold, then $I^{\bar p}H^c_*(X\times M)\cong H_*(I^{\bar p}C_*^c(X)\otimes C_*^c(M))$.
\end{corollary}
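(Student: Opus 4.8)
The plan is to derive this corollary from Theorem~\ref{T: Kunneth} by choosing the product perversity $Q$ so that the allowability condition on $X\times M$ sees the manifold factor $M$ as contributing nothing to the perversity budget. Concretely, regard $M$ as a (trivially) stratified pseudomanifold with only its top stratum, so that $M=M_n$ and $M_{n-l}=\emptyset$ for $l>0$. Then the only strata of $X\times M$ are the products $X_{m-k}\times M_n$, which have codimension $k$ in $X\times M$, and on such a stratum we want the allowability condition to be exactly the one defining $I^{\bar p}C^c_*(X)$. So I would set $Q(k,0)=\bar p(k)$ for all $k$, and (vacuously) $Q(0,l)=\bar q(l)$ for whatever perversity $\bar q$ we like on the (empty) positive-codimension strata of $M$; the values $Q(k,l)$ for $l>0$ are irrelevant since those strata are empty. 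With this $Q$, condition (1) of Theorem~\ref{T: Kunneth} holds, and condition (2) is vacuous because there are no pairs $k,l$ with $0<k\le m$, $0<l\le n$ for which $X_{m-k}\times M_{n-l}$ is nonempty (indeed $M_{n-l}=\emptyset$). Hence Theorem~\ref{T: Kunneth} applies and gives $I^QH^c_*(X\times M)\cong H_*(I^{\bar p}C^c_*(X)\otimes I^{\bar q}C^c_*(M))$.

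Next I would identify both sides with the desired objects. On the left, since the perversity $Q$ evaluated on the codimension-$k$ stratum $X_{m-k}\times M_n$ is just $\bar p(k)$, the complex $I^QC^c_*(X\times M)$ is by definition the ordinary intersection chain complex $I^{\bar p}C^c_*(X\times M)$ for the product filtration, so $I^QH^c_*(X\times M)\cong I^{\bar p}H^c_*(X\times M)$. On the right, $M$ carries the trivial one-stratum filtration, so for \emph{any} perversity $\bar q$ the admissibility condition is automatically satisfied by every singular simplex (there are no positive-codimension strata to obstruct anything), whence $I^{\bar q}C^c_*(M)=C^c_*(M)$, the ordinary singular chain complex. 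Combining, $I^{\bar p}H^c_*(X\times M)\cong H_*(I^{\bar p}C^c_*(X)\otimes C^c_*(M))$, which is the claim.

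Two small points warrant care. First, one must check the hypothesis ``$\bar p(k)\le k-2$ for $k>0$'' of Theorem~\ref{T: Kunneth} is met: this is part of the running assumption on $\bar p$ in this section, so there is nothing to do, but I would say so explicitly. Second, Theorem~\ref{T: Kunneth} is stated for stratified \emph{pseudomanifolds}; one should note that an $n$-manifold $M$, filtered trivially by $M=M^n\supset M^{n-1}=\emptyset$, is indeed a stratified pseudomanifold in the sense of Section~\ref{S: background} (its only link is $\emptyset$, with $c(\emptyset)$ a point, and the distinguished neighborhoods are the Euclidean charts $\R^n\times c(\emptyset)=\R^n$). After these verifications the corollary is immediate.

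I do not anticipate a genuine obstacle here: the entire content is the bookkeeping observation that a trivially stratified factor contributes no perversity constraints, so that condition (2) of Theorem~\ref{T: Kunneth} is vacuous and condition (1) pins down $Q$. The only thing that could be called ``the hard part'' is making sure the reader sees why $I^{\bar q}C^c_*(M)=C^c_*(M)$ and why the product perversity $Q$ recovers ordinary $I^{\bar p}$ on $X\times M$ — i.e., confirming that the product filtration of $X\times M$ has strata exactly $X_{m-k}\times M$ of codimension $k$. These are routine once spelled out, and the proof as given in the excerpt (``If $M$ is a manifold, take the obvious $Q$ \dots'') is essentially a one-line invocation of Theorem~\ref{T: Kunneth} along these lines.
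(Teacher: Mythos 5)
Your proposal is correct and follows essentially the same route as the paper: invoke Theorem~\ref{T: Kunneth} on $X\times M$ with $M$ trivially stratified, so that only the values $Q(k,0)$ matter, and then identify $I^{\bar q}C^c_*(M)$ with $C^c_*(M)$. The one stylistic difference worth noting: you leave $\bar q$ arbitrary and argue that condition~(2) of the theorem is vacuous because all strata $X_{m-k}\times M_{n-l}$ with $l>0$ are empty, whereas the paper makes the cleaner concrete choice $\bar q=\bar 0$ and $Q(k,l)=\bar p(k)+\bar 0(l)$, which satisfies condition~(2a) literally rather than vacuously. Your reading requires interpreting the theorem's hypothesis~(2) as a condition on nonempty strata only (which is how the proof actually uses it, since the local computations are carried out at points of the strata), so it is defensible, but the paper's choice sidesteps even the appearance of a gap. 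Your observation that $I^{\bar q}C^c_*(M)=C^c_*(M)$ on the nose (every simplex is allowable when there are no singular strata) is a slight sharpening of the paper's appeal to the quasi-isomorphism $I^{\bar 0}C_*(M)\simeq C_*(M)$ from \cite{GM2}.
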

\begin{proof}
Take $Q(k,l)=\bar p(k)=\bar p(k)+\bar 0(l)$, where $\bar 0$ is the perversity such that $\bar 0(l)=0$ for all $l$. We have $I^QH_*(X\times M)\cong I^{\bar p}H_*(X\times M)$ because all strata of $X\times M$ are of the form $X_k\times M$, so only the values $Q(k,0)=\bar p(k)$ come into play. On the other hand, 
recall \cite{GM2} that for any perversity $\bar q$, in particular $\bar 0$, $I^{\bar q}C_*(M)$ is quasi-isomorphic to $C_*(M)$.
\end{proof}

\section{Proof of Theorem \ref{T: Kunneth}}\label{S: proof}

In this section, we prove Theorem \ref{T: Kunneth}. We begin with some preliminaries about sheaves of intersection chains. While  we fundamentally work with intersection homology from the chain complex point of view (as opposed to axiomatics in the derived category of sheaves, for which see \cite{GM2,Bo,BaIH}, etc.), it is useful nonetheless to utilize sheaves of chain complexes in our framing argument. Recall from \cite{GBF10} that one can define a sheaf complex $\mc I^{\bar p}\mc C^*$ on the $m$-dimensional filtered space $X$ as the sheafification of the presheaf $U\to I^{\bar p}C^{\infty}_{m-*}(X,X-\bar U)$ or, equivalently, the presheaf $U\to I^{\bar p}C^{c}_{m-*}(X,X-\bar U)$. It is shown in \cite{GBF10} that the former presheaf is conjunctive for coverings and has no non-trivial global sections with empty support. Furthermore, the sheaf $\mc I^{\bar p}\mc C^*$  is homotopically fine. As a consequence, the hypercohomology $\H^i(X;\mc I^{\bar p}\mc C^*)$ is isomorphic to $I^{\bar p}H^{\infty}_{m-i}(X)$, and $\H_c^i(X;\mc I^{\bar p}\mc C^*)\cong I^{\bar p}H^{c}_{m-i}(X)$. These statements also hold using loose perversities. 

\subsection{Sheaf preliminaries} 
Even in the present setting of very general perversities on product spaces, the basic results of loose perversity singular chain intersection homology established in \cite{Ki} and \cite{GBF10} still hold for $I^QH_*$, such as stratum-preserving homotopy equivalence, excision, the K\"unneth theorem for which one term is $\R^n$ endowed with perversity $0$ on its unique stratum, the cone formula stated above, Mayer-Vietoris sequences, etc. The proofs of these kinds of results do not rely on the form of the perversity or that it be the same for all stratum components of the same dimension. 

Similarly, the standard procedures for creating sheaves of intersection chains and realizing intersection homology as the hypercohomology of these sheaves continue to apply. In particular, we can form a sheaf complex $\mc{I}^Q\mc C^*$ as the sheafification either of the presheaf $I^QC^*:U\to I^QC_{m+n-*}^c(X\times Y, X\times Y-\bar U;R)$ or of the presheaf $U\to I^QC_{m+n-*}^\infty(X\times Y, X\times Y-\bar U;R)$. The latter presheaf is conjunctive for coverings and has no nontrivial global section with empty support, and the  sheaf $\mc{I}^Q\mc C^*$ is homotopically fine. So $\H^*(X\times Y;\mc{I}^Q\mc C^*)\cong H^*(\Gamma(X\times Y;\mc{I}^Q\mc C^*))\cong I^QH^{\infty}_{n+m-*}(X\times Y;R)$ and similarly with compact supports. The arguments are precisely the same as in \cite{GBF10} for the more standard intersection chain complexes. If we were working with PL chains, we could define instead the sheaf of PL intersection chains in the usual manner, and this sheaf would be soft, as shown for the standard intersection chains in \cite[Proposition II.5.1]{Bo} (see also the excision arguments in \cite{GBF10}).

The main step in the proof of  Theorem \ref{T: Kunneth} will be to show that, under the hypotheses of the theorem, $\mc{I}^Q\mc C^*$ is quasi-isomorphic to  $\pi_1^*(\mc I^{\bar p}\mc C_X^*)\otimes \pi_2^* (\mc I^{\bar q}\mc C_Y^*)$, where $\pi_1,\pi_2$ are the projections of $X\times Y$ to $X$ and $Y$ and $\mc I^{\bar p}\mc C_X^*$ and $\mc I^{\bar q}\mc C_Y^*$ are the sheaves of singular intersection chains over $X$ and $Y$, respectively. To see why this suffices, 
we need the following useful lemma and its consequences.

\begin{lemma}\label{L: flat}
$\mc I^{\bar p}\mc C_X^*$ is flat, and thus so is $\pi_1^*(\mc I^{\bar p}\mc C_X^*)$. Therefore, $\pi_1^*(\mc I^{\bar p}\mc C_X^*)\otimes \pi_2^* (\mc I^{\bar q}\mc C_Y^*)$ represents the left derived functor $\pi_1^*(\mc I^{\bar p}\mc C_X^*)\overset{L}{\otimes} \pi_2^* (\mc I^{\bar q}\mc C_Y^*)$.
\end{lemma}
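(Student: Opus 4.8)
The plan is to reduce the assertion to a stalkwise computation and then push it through pullback and tensor product by purely formal arguments. Recall that a sheaf of $R$-modules is flat precisely when each of its stalks is a flat $R$-module, since both $\otimes_R$ and exactness of sequences of sheaves are detected on stalks, and for complexes one then upgrades ``termwise flat'' to a statement about derived functors using boundedness. So the first — and essentially the only non-formal — step is to identify the stalks of $\mc I^{\bar p}\mc C_X^*$. Since sheafification does not change stalks, since filtered colimits are exact, and since by \cite{GBF10} we may work with the compactly-supported presheaf $U\mapsto I^{\bar p}C^c_{m-*}(X,X-\bar U;R)$, we get
\[
(\mc I^{\bar p}\mc C_X^k)_x \;\cong\; \varinjlim_{U\ni x} I^{\bar p}C^c_{m-k}(X,X-\bar U;R) \;\cong\; I^{\bar p}C^c_{m-k}(X;R)\big/\{\,\xi : x\notin \text{supp}\,\xi\,\},
\]
the allowable $(m-k)$-chains modulo those whose (compact, hence closed) support misses $x$; here one uses that $X$, being paracompact Hausdorff and thus regular, has the property that a compact set avoiding $x$ is contained in $X-\bar U$ for some neighborhood $U$ of $x$.

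Next I would run the identical computation for the sheaf $\mc C_X^*$ of ordinary singular chains, the sheafification of $U\mapsto C^c_{m-*}(X,X-\bar U;R)$: its stalk at $x$ is $C^c_{m-k}(X;R)$ modulo the chains supported away from $x$, and that submodule is spanned by the subset of the standard basis of singular simplices whose image avoids $x$, so it is a direct summand and the stalk is a \emph{free} $R$-module. Now the presheaf map $I^{\bar p}C^c_*(X,X-\bar U;R)\to C^c_*(X,X-\bar U;R)$ is injective for every $U$, because a chain contained in $X-\bar U$ is allowable with allowable boundary as a chain of $X$ if and only if it is so as a chain of $X-\bar U$ with the restricted filtration (allowability of a simplex contained in $X-\bar U$ depends only on the preimages of the strata it meets, which are unchanged); equivalently, $I^{\bar p}C^c_*(X;R)\cap C^c_*(X-\bar U;R)=I^{\bar p}C^c_*(X-\bar U;R)$. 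Since sheafification is exact, $\mc I^{\bar p}\mc C_X^*$ is therefore a subsheaf of $\mc C_X^*$, so each stalk $(\mc I^{\bar p}\mc C_X^k)_x$ is a submodule of a free $R$-module, hence torsion-free, hence — as $R$ is a PID — flat. Thus $\mc I^{\bar p}\mc C_X^*$ is a complex of flat sheaves. Pulling back along $\pi_1$ preserves this, since $(\pi_1^*\mc F)_{(x,y)}\cong \mc F_x$, and a direct sum of flat modules is flat, so every term of $\pi_1^*(\mc I^{\bar p}\mc C_X^*)\otimes \pi_2^*(\mc I^{\bar q}\mc C_Y^*)$ is again a flat sheaf.

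Finally, for the statement about the derived functor I would note that $\mc I^{\bar p}\mc C_X^*$, hence $\pi_1^*(\mc I^{\bar p}\mc C_X^*)$, is bounded above, because the presheaf $I^{\bar p}C^c_{m-k}$ vanishes whenever $k>m$. A bounded-above complex of flat sheaves is homotopy-flat: it is the filtered union of its brutal truncations $\sigma_{\geq j}$, each of which is a bounded complex of flat sheaves and therefore sends quasi-isomorphisms to quasi-isomorphisms under $\otimes$, and this property is inherited by the filtered colimit. Consequently $(-)\otimes \pi_1^*(\mc I^{\bar p}\mc C_X^*)$ preserves quasi-isomorphisms, which is exactly the assertion that $\pi_1^*(\mc I^{\bar p}\mc C_X^*)\otimes \pi_2^*(\mc I^{\bar q}\mc C_Y^*)$ represents $\pi_1^*(\mc I^{\bar p}\mc C_X^*)\overset{L}{\otimes}\pi_2^*(\mc I^{\bar q}\mc C_Y^*)$. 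I expect the only real subtlety to be the stalk identification in the first paragraph — in particular, committing to the compactly-supported presheaf so that the ambient chain modules are free and the ``chains avoiding $x$'' submodule is genuinely saturated; the Cartan/regularity point and everything downstream is standard homological bookkeeping.
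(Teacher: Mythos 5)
Your proposal is correct and follows essentially the same strategy as the paper: embed intersection chains in ordinary singular chains, observe that the ordinary chain groups are free $R$-modules via the ``basis of simplices not supported in the subspace'' splitting, and invoke the PID hypothesis to conclude flatness. The one cosmetic difference is the level at which you run the argument: the paper shows that the \emph{presheaf} sections $I^{\bar p}C^c_*(X,X-\bar U)$ are free (as submodules of the free module $C^c_*(X,X-\bar U)$ over a PID) and then notes that flatness passes to the sheafification because the stalk-forming direct limit is exact and commutes with $\otimes$; you instead compute the stalk directly and observe that it is a torsion-free submodule of a free stalk, hence flat. These are the same computation in slightly different clothing. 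Your extra care on two points --- verifying that allowability of a chain supported in $X-\bar U$ is an intrinsic notion, and spelling out the bounded-above/homotopy-flat step that gets from ``termwise flat'' to ``represents $\overset{L}{\otimes}$'' --- is harmless detail that the paper leaves implicit, and it is all sound.
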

\begin{proof}
The pullback of a flat sheaf is flat, since flatness is a property of the stalks (see, e.g. \cite[V.6.1]{Bo}). Thus we need only show that $\mc I^{\bar p}\mc C_X^*$ is flat. By \cite{GBF10}, $\mc I^{\bar p}\mc C_X^*$ can be defined as the sheafification of the presheaf $U\to I^{\bar p}C^c_*(X,X-\bar U)$. 

We note that the obvious homomorphism induced by inclusion $j:I^{\bar p}C^c_*(X,X-\bar U)\to C^c_*(X,X-\bar U)$ is injective, since if $\xi$ is an allowable chain representing an element of $I^{\bar p}C^c_*(X,X-\bar U)$ and $j(\xi)=0$, then $|\xi|\in X-\bar U$, and thus $|\xi|=0\in I^{\bar p}C^c_*(X,X-\bar U)$. So $I^{\bar p}C^c_*(X,X-\bar U)$ is isomorphic to a submodule of $C^c_*(X,X-\bar U)$. We claim that this latter module is free, and thus $I^{\bar p}C^c_*(X,X-\bar U)$ is free, since submodules of free modules over principal ideal domains are free (see, e.g \cite[Theorem III.7.1]{LANG}). The lemma will then follow, since a sheaf derived from a flat presheaf is flat, as the direct limit functor is exact and commutes with tensor products. 

 To verify the claim, we observe that there is a splitting $r:C^c(X)\to C^c(X-\bar U)$ defined by taking singular simplices that do not have support in $X-\bar U$ to $0$ and by acting as the identity on singular simplices that do have support in $X-\bar U$. This is clearly a right inverse to the inclusion $C^c_*(X-\bar U)\into C^c_*(X)$. It follows that the quotient $C^c_*(X,X-\bar U)$ is isomorphic to a direct summand of $C^c_*(X)$, which is free, and so $C^c_*(X,X-\bar U)$ is also free. 
 
 (Note that this splitting argument does not work directly on intersection chains since subchains of allowable chains are not always allowable).
\end{proof}

\begin{corollary}
$$ \H_c^{n+m-i}(X\times Y;\pi_1^*(\mc I^{\bar p}\mc C_X^*)\otimes \pi_2^* (\mc I^{\bar q}\mc C_Y^*) )\cong H_i(I^{\bar p}C^c_*(X)\otimes I^{\bar q}C^c_*(Y))$$ 
\end{corollary}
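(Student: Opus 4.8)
The plan is to compute the compactly-supported hypercohomology of the external tensor product sheaf by a K\"unneth-type argument for hypercohomology, reducing it to the hypercohomologies of the two factor sheaves and then to the intersection homologies of $X$ and $Y$. First I would invoke Lemma \ref{L: flat}: since $\mc I^{\bar p}\mc C_X^*$ is flat, the pullbacks $\pi_1^*(\mc I^{\bar p}\mc C_X^*)$ and $\pi_2^*(\mc I^{\bar q}\mc C_Y^*)$ are flat, so their ordinary tensor product computes the derived tensor product, i.e.\ the external product $\mc I^{\bar p}\mc C_X^* \boxtimes^L \mc I^{\bar q}\mc C_Y^*$ on $X\times Y$. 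This is exactly what is needed to apply the sheaf-theoretic K\"unneth formula for hypercohomology with compact supports on a product of locally compact spaces: there is a split short exact sequence expressing $\H_c^{*}(X\times Y; \pi_1^*\mc A \otimes \pi_2^*\mc B)$ in terms of $\H_c^*(X;\mc A)$ and $\H_c^*(Y;\mc B)$ via $\otimes$ and $\operatorname{Tor}$, valid over a PID $R$ provided the sheaves are bounded-below with the appropriate flatness/finiteness (or, cleanly, that one can represent these hypercohomologies by complexes of flat $R$-modules).

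The cleanest route, and the one I would actually write, avoids citing a sheaf K\"unneth theorem directly and instead works at the level of chain complexes. By the hypercohomology identification recalled just before Lemma \ref{L: flat}, $\H_c^{m-*}(X;\mc I^{\bar p}\mc C_X^*)\cong I^{\bar p}H_*^c(X)$ and similarly for $Y$; moreover, because $\mc I^{\bar p}\mc C_X^*$ and $\mc I^{\bar q}\mc C_Y^*$ are homotopically fine (hence $\Gamma_c$-acyclic), these hypercohomologies are computed by the complexes of compactly-supported sections. Now $\pi_1^*(\mc I^{\bar p}\mc C_X^*)\otimes \pi_2^*(\mc I^{\bar q}\mc C_Y^*)$ is again homotopically fine (a tensor product of homotopically fine sheaves, one of them flat, is homotopically fine — the homotopy contracting partitions of unity of the factors combine), so $\H_c^*$ of the external product is computed by $\Gamma_c(X\times Y; \pi_1^*\mc I^{\bar p}\mc C_X^*\otimes \pi_2^*\mc I^{\bar q}\mc C_Y^*)$. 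The key identification is then $\Gamma_c(X\times Y; \pi_1^*\mc A\otimes \pi_2^*\mc B)\cong \Gamma_c(X;\mc A)\otimes_R \Gamma_c(Y;\mc B)$ for homotopically fine (soft-like) sheaves on locally compact spaces — this is the standard fact underlying sheaf K\"unneth theorems. Applying it degreewise to the complexes $\mc I^{\bar p}\mc C_X^*$ and $\mc I^{\bar q}\mc C_Y^*$ gives a chain isomorphism between $\Gamma_c$ of the external tensor product and the total tensor product complex $\Gamma_c(X;\mc I^{\bar p}\mc C_X^*)\otimes_R \Gamma_c(Y;\mc I^{\bar q}\mc C_Y^*)$.

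Finally I would feed this into the algebraic K\"unneth theorem over the PID $R$: since $\Gamma_c(X;\mc I^{\bar p}\mc C_X^*)$ is a complex of flat (indeed, by Lemma \ref{L: flat}, even free) $R$-modules with homology $I^{\bar p}H^c_{m-*}(X)$, and likewise for $Y$, the homology of the tensor product complex is $\bigoplus_{j+k=i} I^{\bar p}H^c_j(X)\otimes I^{\bar q}H^c_k(Y)\oplus \bigoplus_{j+k=i-1} I^{\bar p}H^c_j(X)*I^{\bar q}H^c_k(Y)$, which is by definition $H_i(I^{\bar p}C^c_*(X)\otimes I^{\bar q}C^c_*(Y))$ (a second application of the algebraic K\"unneth theorem, using that $I^{\bar p}C^c_*(X)$ is a complex of flat modules, identifies this with the right-hand side). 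Tracking the homological/cohomological degree shift, $\H_c^{n+m-i}$ corresponds to $H_i$, giving the stated formula. I expect the main obstacle to be the sheaf-theoretic bookkeeping in the middle paragraph: establishing that the external tensor product of the two homotopically fine sheaf complexes is again $\Gamma_c$-acyclic and that $\Gamma_c$ of the external product is the tensor product of the $\Gamma_c$'s, with correct handling of the grading conventions (the $m-*$ and $n-*$ reindexings) and of local finiteness in the product — exactly the kind of compatibility that is routine but easy to get sign- or index-wrong. If a direct chain-level argument proves delicate, the fallback is to cite a standard sheaf K\"unneth theorem (e.g.\ in the style of \cite{Bo}) together with Lemma \ref{L: flat} to supply the flatness hypothesis.
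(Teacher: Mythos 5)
Your overall strategy is the same as the paper's: use Lemma~\ref{L: flat} to replace the ordinary tensor product of pullback sheaves with the derived tensor product, apply a compactly-supported sheaf-theoretic K\"unneth theorem to split $R\Gamma_c$ over the product into a derived tensor of the two $R\Gamma_c$'s, and then finish with the algebraic K\"unneth theorem over the PID $R$ after the degree reindexing $\H_c^{m-*}(X;\mc I^{\bar p}\mc C_X^*)\cong I^{\bar p}H_*^c(X)$. This is precisely what the paper does, invoking \cite[Theorem V.10.19]{Bo} for the step $R\Gamma_c(X\times Y;\pi_1^*\mc A\overset{L}{\otimes}\pi_2^*\mc B)\cong R\Gamma_c(X;\mc A)\overset{L}{\otimes}R\Gamma_c(Y;\mc B)$; your fallback sentence at the end names exactly this route.

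However, the middle paragraph you say you ``would actually write'' contains claims that are not justified and are genuinely dubious. First, you assert that $\pi_1^*(\mc I^{\bar p}\mc C_X^*)\otimes\pi_2^*(\mc I^{\bar q}\mc C_Y^*)$ is homotopically fine because ``the homotopy contracting partitions of unity of the factors combine.'' Homotopical fineness is not obviously preserved by pullback along a projection, nor by tensoring two such pullbacks, and the paper does not claim this. Second, the degreewise identification $\Gamma_c(X\times Y;\pi_1^*\mc A\otimes\pi_2^*\mc B)\cong\Gamma_c(X;\mc A)\otimes_R\Gamma_c(Y;\mc B)$ is a property of \emph{c-soft} sheaves on locally compact spaces, not a consequence of homotopical fineness; the singular intersection chain sheaves here are known to be homotopically fine but are not known to be soft (only the PL version is, as the paper remarks). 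So ``homotopically fine (soft-like)'' is doing unearned work, and the claim as stated does not follow. The derived-category statement \cite[Theorem V.10.19]{Bo} sidesteps both issues by passing to c-soft resolutions internally, which is why the paper routes through it rather than through degreewise sections. You should drop the middle paragraph and write the argument as in your first and last paragraphs: flatness via Lemma~\ref{L: flat}, the cited sheaf K\"unneth theorem, and the algebraic K\"unneth theorem.
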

\begin{proof}
By definition,  $\H^i_c(X\times Y;\pi_1^*(\mc I^{\bar p}\mc C_X^*)\otimes \pi_2^* (\mc I^{\bar q}\mc C_Y^*))= H^i(R\Gamma_c(X\times Y;\pi_1^*(\mc I^{\bar p}\mc C_X^*)\otimes \pi_2^* (\mc I^{\bar q}\mc C_Y^*)))$. By Lemma \ref{L: flat}, the tensor product is the same as the left derived tensor product, and by \cite[Theorem V.10.19]{Bo}, $R\Gamma_c(X\times Y;\pi_1^*(\mc I^{\bar p}\mc C_X^*)\overset{L}{\otimes} \pi_2^* (\mc I^{\bar q}\mc C_Y^*))\cong R\Gamma_c(X;\mc I^{\bar p}\mc C_X^*)\overset{L}{\otimes}R\Gamma_c(Y;\mc I^{\bar q}\mc C_Y^*)$. 
Thus, 
\begin{align*}
\H_c^{n+m-i}&(X\times Y;\pi_1^*(\mc I^{\bar p}\mc C_X^*)\otimes \pi_2^* (\mc I^{\bar q}\mc C_Y^*) ) \\
&\cong \H_c^{n+m-i}(X\times Y;\pi_1^*(\mc I^{\bar p}\mc C_X^*)\overset{L}{\otimes} \pi_2^* (\mc I^{\bar q}\mc C_Y^*))\\
&\cong H^{n+m-i}(R\Gamma_c(X\times Y;\pi_1^*(\mc I^{\bar p}\mc C_X^*)\overset{L}{\otimes} \pi_2^* (\mc I^{\bar q}\mc C_Y^*)))\\
&\cong H^{n+m-i}(R\Gamma_c(X;\mc I^{\bar p}\mc C_X^*)\overset{L}{\otimes} R\Gamma_c(Y;\mc I^{\bar q}\mc C_Y^*))\\
&\cong \bigoplus_{r+s=n+m-i}\H_c^{r}(X;\mc I^{\bar p}\mc C_X^*)\otimes \H_c^{s}(Y;\mc I^{\bar q}\mc C_Y^*)\oplus\bigoplus_{r+s=n+m-i-1}\H_c^{r}(X;\mc I^{\bar p}\mc C_X^*)* \H_c^{s}(Y;\mc I^{\bar q}\mc C_Y^*)\\
&\cong  \bigoplus_{r+s=n+m-i}I^{\bar p}H^c_{m-r}(X)\otimes I^{\bar q}H^c_{n-s}(Y) \oplus \bigoplus_{r+s=n+m-i-1}I^{\bar p}H^c_{m-r}(X)* I^{\bar q}H^c_{n-s}(Y)\\
&\cong \bigoplus_{a+b=i}I^{\bar p}H^c_{a}(X)\otimes I^{\bar q}H^c_{b}(Y) \oplus \bigoplus_{a+b=i-1}I^{\bar p}H^c_{a}(X)* I^{\bar q}H^c_{b}(Y) \\
&\cong H_i(I^{\bar p}C^c_*(X)\otimes I^{\bar q}C^c_*(Y)).
\end{align*}
\end{proof}

Thus to show that the homology of $I^QC_*(X\times Y)$ is the homology of $I^{\bar p}C^c_*(X)\otimes I^{\bar q}C^*_c(Y)$, it suffices to show that $\mc I^Q\mc C^*$ is quasi-isomorphic to $\pi_1^*(\mc I^{\bar p}\mc C_X^*)\otimes \pi_2^* (\mc I^{\bar q}\mc C_Y^*)$.

As an intermediary for this comparison, we will use a partially-defined presheaf $R^*$ over $X\times Y$ such that if $U\subset X$ and $V\subset Y$ are open, we define $R^{n+m-*}(U\times V)=I^{\bar p}C_{*}^{\infty}(X,X-\bar U)\otimes I^{\bar q}C_{*}^{\infty}(Y,Y-\bar V)$. Even though this presheaf is defined only on sets of the form $U\times V$, restriction morphisms are nevertheless well-defined on such sets and every point has a cofinal system of neighborhoods of this form, and so an obvious modification of the usual sheafification process generates a sheaf $\mc R^*$, which is quasi-isomorphic to  $\pi_1^*(\mc I^{\bar p}\mc C_X^*)\otimes \pi_2^* (\mc I^{\bar q}\mc C_Y^*)$. As for the ordinary singular intersection chain sheaf, $\mc R^*$ is also generated up to quasi-isomorphism by the partially-defined presheaf $U\times V\to I^{\bar p}C_{*}^{c}(X,X-\bar U)\otimes I^{\bar q}C_{*}^{c}(Y,Y-\bar V)$; see \cite[Lemma 3.1]{GBF10}.

We will use the singular chain cross product  to induce homomorphisms $\phi: R^*(U\times V)\to I^QC^*(U\times V)$. We will show that $\phi$ induces a well-defined sheaf quasi-isomorphism under the hypotheses of the theorem.

We first need to determine what conditions are necessary on $Q$ for the exterior chain product $\epsilon: C_*(X) \times C_*(Y)\to C_*(X\times Y)$ (see, e.g., \cite{MK,McC}) to restrict to a well-defined homomorphism $I^{\bar p}C_*(X)\otimes I^{\bar q}C_*(Y)\to I^QC_*(X\times Y)$. For this, we claim it is sufficient to have $Q(k,l)\geq \bar p(k)+\bar q(l)$.

To verify this claim, we  note that if we have an $i$-simplex $\sigma$ in the image of a chain\footnote{Of course not every element of $I^{\bar p}C_*(X)\otimes I^{\bar q}C_*(Y)$ can be written in this form, but this group is generated by elements of this form, so it suffices to check these.} $\epsilon(\xi_1\otimes \xi_2)$, then we can consider the domain of $\sigma$ to be a simplicial simplex $\delta$  in the standard triangulation of $\Delta^a\times \Delta^b$ for some $a,b$ with $a+b=i$, and $\sigma$ is determined by restricting to $\delta$ the product $\sigma_1\times \sigma_2$ of two singular simplex maps. Here $\sigma_1,\sigma_2$ are simplices of $\xi_1,\xi_2$, respectively. Now,  $\sigma^{-1}(X_k\times Y_l)$ is the intersection of $\delta$ with  $\sigma_1^{-1}(X_k)\times \sigma_2^{-1}(Y_l)$. Since, by the allowability of $\xi_1,\xi_2$, $\sigma_1^{-1}(X_k)$ and $\sigma_2^{-1}(Y_k)$ are contained in, respectively, the $a-k+\bar p(k)$ and $b-l+\bar q(l)$ skeleta of $\Delta^a$ and $\Delta^b$ and since the product of the $r$-skeleton of $\Delta^a$ with the $s$-skeleton of $\Delta^b$ is contained in the $r+s$ skeleton of the standard  triangulation of $\Delta^a\times \Delta^b$, it follows that $\delta\cap \sigma^{-1}(X_k\times Y_l)$ is contained in the $i-k-l+\bar p(k)+\bar q(l)$ skeleton of $\delta$ and hence the $i-k-l+Q(k,l)$ skeleton of $\delta$ if $Q(k,l)\geq \bar p(k)+\bar q(l)$. The same argument applies to any simplices of the boundary, using $\bd \epsilon(\xi_1\times \xi_2)=\pm \epsilon(\bd \xi_1\otimes \xi_2\pm \xi_1\otimes\bd \xi_2)$ and the allowability of $\bd \xi_1$ and $\bd \xi_2$.

\subsection{Induction steps}

We now induct on the depth of the product $X\times Y$, i.e. the difference in dimension between $\dim(X\times Y)$ and the lowest-dimensional non-empty stratum of $X\times Y$. At each stage, we determine precisely the conditions on $Q$, $X$, and $Y$ that are necessary for $\phi: R^*(U\times V)\to I^QC^*(U\times V)$ to induce a quasi-isomorphism.

\paragraph{Depth 0.}
In the simplest case, $X$ and $Y$ are unfiltered manifolds. In this case, as is usual for intersection homology, $I^{\bar p}H_*(X;R)\cong H_*(X;R)$, since all chains are allowable, and similarly for $Y$. Since $I^QC_*$ is  an intersection homology theory with a loose perversity and there are only strata of codimension $0$ here to consider, $I^QC_*(X\times Y)$ must be $C_*(X\times Y)$, so long as $Q(0,0)\geq 0$ (otherwise there would be no allowable chains at all!). So, by the standard singular homology K\"unneth theorem, Theorem \ref{T: Kunneth} holds up through depth $0$.

\paragraph{Induction.}

Suppose now that we have proven Theorem \ref{T: Kunneth} on products of  $X\times Y$  of depth $<J$ and that $X$ and $Y$ are pseudomanifolds such that $X\times Y$ has depth $J$ and that the hypotheses of Theorem \ref{T: Kunneth} hold.

We perform a second induction over codimension of the strata. For points in the top stratum of $X\times Y$, which have euclidean neighborhoods, the isomorphism of the local stalk homology groups of $\mc I^Q\mc C^*$ and $\mc R^*$ is again just the local K\"unneth formula for manifolds. So, we assume that $\phi$ has been shown to be a quasi-isomorphism on strata of $X\times Y$ of codimension $<K$ and turn to  examining the conditions that allow us to continue on to codimension $K$. 

Suppose that $S$ is a connected stratum component of dimension $m+n-K$ in $X^m\times Y^n$. It turns out that there are two different cases to consider, depending on whether $S$ is a component of $X_{m-K}\times Y_n$ (or, symmetrically equivalently, $X_m\times Y_{n-K}$) or a component of $X_{m-k}\times Y_{n-l}$ with $k+l=K$, $k,l>0$.

\subparagraph{Case 1.}

Suppose $x\in X_{m-K}\times Y_n$, the stratum of codimension $(K,0)$. Let us first compute $H^{*}(\mc R^*_x)$. Letting $L^{K-1}$ be the link of $\pi_1(x)$ in $X$, so that $x$ has a neighborhood of the form $\R^{m-K}\times cL\times \R^n\cong \R^{n+m-K}\times cL$, 
we have
\begin{align*}
H^{*}(\mc R^*_x) &\cong \dlim_{x\in U\times V}
H_{n+m-*}(I^{\bar p}C^{c}_*(X, X-\bar U)\otimes I^{\bar q}C^{c}_*(Y,Y-\bar V))\\
&\cong H_{n+m-*}(I^{\bar p}C^{c}_*(X, X-\pi_1(x))\otimes I^{\bar q}C^{c}_*(Y,Y-\pi_2(x))).\\
\end{align*}
Since  $I^{\bar p}C^{c}_*(Y, Y-\pi_2(x))$ is flat, as follows from the proof of Lemma \ref{L: flat}, we can apply the K\"unneth theorem. Furthermore, since $\pi_2(x)\in Y_n$, $I^{\bar q}H^{c}_*(Y, Y-\pi_2(x))\cong I^{\bar q}H^c_*(\R^n,\R^n-0)$. So this simplifies to 
\begin{align}
H^{*}(\mc R^*_x)&\cong H_{m-*}(I^{\bar p}C^{c}_*(X, X-\pi_1(x)))\notag\\
&\cong I^{\bar p}H^c_{m-*}(\R^{m-K}\times cL, \R^{m-K}\times cL-\pi_1(x))\notag\\
&\cong I^{\bar p}H^c_{K-*}(cL,L\times \R)\\
&\cong 
\begin{cases}
0, &*>\bar p(K)\\
I^pH^c_{K-1-*}(L), & *\leq \bar p(K).
\end{cases}\label{E: case 1 P}
\end{align}

These isomorphisms follow from the excision, cone, and product formulas in \cite{Ki,GBF10}. (These remain valid also for loose perversities so long as $\bar p(k)\leq k-2$ for $k>0$.)

On the other hand, similarly,
\begin{align}
H^*(\mc I^Q\mc C^*_x) &\cong \dlim_{x\in U} I^QH^c_{n+m-*}(X\times Y;X\times Y-\bar U)\notag\\
&\cong I^QH^c_{n+m-*}(X\times Y;X\times Y-x)\notag\\
&\cong I^QH^c_{n+m-*}(\R^{m+n-K}\times cL, \R^{n+m-K}\times cL-x)\notag\\
&\cong I^QH^c_{K-*}(cL,L\times \R)\notag\\
&\cong 
\begin{cases}
0, &*>Q(K,0)\\
I^QH^c_{K-1-*}(L), & *\leq Q(K,0),
\end{cases}\label{E: case 1 K}
\end{align}
so long as $Q(K,0)\leq K-2$, which will be the case if $Q(K,0)=\bar p(K)$. 

Now, examining $I^QH^c_{*}(L)$, we observe that the filtration of $L$ is by the intersection of $L$ with strata of the form $X_j\times Y_n$ in $X\times Y$, $m-K< j\leq m$. In other words, $L$ has the structure of a product pseudomanifold of the form $L\times pt$, with its    perversity on the stratum $L_j\times *$, $0\leq j\leq J-1$, being that inherited from the intersection of $L$ with $X_{m-K+j+1}\times Y_n$. I.e., the perversity on $L_j\times pt$ is just $Q(K-1-j,0)$. Since we have assumed Theorem \ref{T: Kunneth} through depth $K-1$ and since $L$ has smaller depth, $I^QH^c_{*}(L)\cong H_*(I^{\bar p}C^c_*(L)\otimes I^{\bar q}C^c_*(pt))$. But the intersection homology of a point is the ordinary homology of a point, so this becomes $I^QH^c_{*}(L\times pt)\cong I^{\bar p}H_*^c(L).$

It follows from these computations that, in general, in order for Theorem \ref{T: Kunneth} to extend here, it is necessary and sufficient to have  $Q(K,0)=\bar p(K)$. 

Similarly, we establish that $Q(0,K)=\bar q(K)$.

\subparagraph{Case 2}

In this case, we assume that $x\in X_{m-k}\times Y_{n-l}$ with $k,l>0$, $k+l=K$. The difference now is that the neighborhood of $x$ has the form $\R^{m-k}\times cL^1\times \R^{n-l}\times cL_2$, which is homeomorphic to $\R^{m+n-k-l}\times cL$ with $L$ being the join $L=L_1*L_2$. 

$H^*(\mc R^*_x)$ remains fairly straightforward (aside from some index juggling) as 
\begin{align}
H^{*}(\mc R^*_x) &\cong \dlim_{x\in U\times V}
H_{n+m-*}(I^{\bar p}C^{c}_*(X, X-\bar U)\otimes I^{\bar q}C^{c}_*(Y,Y-\bar V))\notag\\
&\cong H_{n+m-*}(I^{\bar p}C^{c}_*(X, X-\pi_1(x))\otimes I^{\bar q}C^{c}_*(Y,Y-\pi_2(x)))\notag\\
&\cong \bigoplus_{a+b=n+m-*} I^{\bar p}H^{c}_a(X, X-\pi_1(x))\otimes I^{\bar q}H^{c}_b(Y,Y-\pi_2(x))\notag\\ &\qquad \oplus \bigoplus_{a+b=n+m-*-1} I^{\bar p}H^{c}_a(X, X-\pi_1(x))* I^{\bar q}H^{c}_b(Y,Y-\pi_2(x))\notag\\
&\cong \bigoplus_{a+b=n+m-*} I^{\bar p}H^{c}_{a-m+k}(cL_1,L_1)\otimes I^{\bar q}H^{c}_{b-n+l}(cL_2,L_2)\notag\\&\qquad  \oplus \bigoplus_{a+b=n+m-*-1} I^{\bar p}H^{c}_{a-m+k}(cL_1,L_1)* I^{\bar q}H^{c}_{b-n+l}(cL_2,L_2)\notag\\
&\cong \bigoplus_{r+s=*} I^{\bar p}H^{c}_{k-r}(cL_1,L_1)\otimes I^{\bar q}H^{c}_{l-s}(cL_2,L_2)\notag\\&\qquad  \oplus \bigoplus_{*=r+s-1} I^{\bar p}H^{c}_{k-r}(cL_1,L_1)* I^{\bar q}H^{c}_{l-s}(cL_2,L_2)\notag\notag\\
&\cong \bigoplus_{\overset{r+s=*}{r\leq \bar p(k), s\leq \bar q(l)}} I^{\bar p}H^{c}_{k-1-r}(L_1)\otimes I^{\bar q}H^{c}_{l-1-s}(L_2)\notag\\&\qquad  \oplus \bigoplus_{\overset{*=r+s-1}{r\leq \bar p(k), s\leq \bar q(l)}} I^{\bar p}H^{c}_{k-1-r}(L_1)* I^{\bar q}H^{c}_{l-1-s}(L_2).\label{E: product}
\end{align}
Notice  that this formula implies that $H^*(\mc R^*_x)=0$ for $*>\bar p(k)+\bar q(l)$.

Now we need to look at $H^*(\mc I^Q\mc C^*_x)$, which after the preliminary steps equivalent to those in the previous case comes down to
\begin{align}\label{E: Q trunc}
H^*(\mc I^Q\mc C^*_x)&\cong I^QH_{k+l-*}(cL,L\times \R)\notag\\
&\cong 
\begin{cases}
0,  & *> Q(k,l)\\
I^QH_{k+l-1-*}(L), &*\leq Q(k,l).
\end{cases} 
\end{align}
The cone formula applies by the Proposition \ref{P: cone}, noting that the vertex of the cone comes from the intersection of $cL$ with the stratum $X_{m-k}\times Y_{n-l}$, which has codimension pair $(k,l)$ and that, under the hypotheses of the theorem, $Q(k,l)\leq k+l-2$. 
Furthermore, even though $L$ is not a product, it inherits, as a subset of a space on which $I^QC_*$ is defined, the loose perversity intersection homology theory with $Q$ as perversity.

To compute $I^QH_{k+l-1-*}(L)$, we use that $L\cong L_1*L_2\cong (L_1\times cL_2) \cup_{L_1\times L_2} (cL_1\times L_2)$. We can then apply the Mayer-Vietoris sequence for loose perversity intersection homology theories. Each of the spaces $L_1\times L_2$, $L_1\times cL_2$, and $cL_1\times L_2$ is a product of pseudomanifolds with total depth less than $J$, and, viewed as subspaces of $X\times Y$, each inherits the  perversity conditions from $X\times Y$. Furthermore, employing Lemma \ref{L: link link}, the hypotheses of Theorem \ref{T: Kunneth} continues to hold on each of these products. 
Thus we may employ the induction hypothesis, by which Theorem \ref{T: Kunneth} holds for products of depth $<J$, and  the Mayer-Vietoris sequence looks like

\begin{diagram}
&\rTo & I^QH^c_*(L_1\times L_2)& \rTo & I^QH^c_*(cL_1\times L_2)\oplus  I^QH^c_*(L_2\times cL_2) &\rTo & I^QH^c_*(L)&\rTo,
\end{diagram} 
which, applying the product structures, becomes 
{\small\begin{diagram}
&\rTo & \underset{\oplus\bigoplus_{a+b=i-1} I^{\bar p}H_a^c(L_1)* I^{\bar q}H^c_b(L_2)}{\bigoplus_{a+b=i} I^{\bar p}H_a^c(L_1)\otimes I^{\bar q}H^c_b(L_2)} & \rTo^{i_*} & \underset{\oplus\bigoplus_{a+b=i-1} I^{\bar p}H_a^c(cL_1)* I^{\bar q}H^c_b(L_2)}{\bigoplus_{a+b=i} I^{\bar p}H_a^c(cL_1)\otimes I^{\bar q}H^c_b(L_2)}\oplus  \underset{\oplus\bigoplus_{a+b=i-1} I^{\bar p}H_a^c(L_1)* I^{\bar q}H^c_b(cL_2)}{\bigoplus_{a+b=i} I^{\bar p}H_a^c(L_1)\otimes I^{\bar q}H^c_b(cL_2)} &\rTo & I^QH^c_*(L)&\rTo,
\end{diagram} }
or, using the cone formula,

\begin{multline}\label{E: MV}
\longrightarrow\underset{\oplus\bigoplus_{a+b=i-1} I^{\bar p}H_a^c(L_1)* I^{\bar q}H^c_b(L_2)}{\bigoplus_{a+b=i} I^{\bar p}H_a^c(L_1)\otimes I^{\bar q}H^c_b(L_2)} \\
 \overset{i_*}{\longrightarrow}  \underset{\oplus\bigoplus_{\overset{a+b=i-1}{a<k-1-\bar p(k)}} I^{\bar p}H_a^c(L_1)* I^{\bar q}H^c_b(L_2)}{\bigoplus_{\overset{a+b=i}{a<k-1-\bar p(k)}} I^{\bar p}H_a^c(L_1)\otimes I^{\bar q}H^c_b(L_2)}  \oplus   \underset{\oplus\bigoplus_{\overset{a+b=i-1}{b<l-1-\bar q(l)}} I^{\bar p}H_a^c(L_1)* I^{\bar q}H^c_b(L_2)}{\bigoplus_{\overset{a+b=i}{b<l-1-\bar q(l)}} I^{\bar p}H_a^c(L_1)\otimes I^{\bar q}H^c_b(L_2)}\\
\longrightarrow I^QH^c_*(L)\longrightarrow.
\end{multline}
The maps $i_*$ here are relatively straightforward, being governed by (products of) inclusions of links into cones on links with the result that $i_*$ is an isomorphism onto a direct summand everywhere that it can be. So, for example, when $a<k-1-\bar p(k)$ and $b<l-1-\bar q(l)$ simultaneously, $i_*$ has the form of a diagonal inclusion of groups $G\into G\oplus G$, and $I^QH^c_*(L)$ inherits a term isomorphic to $G$ from the quotient. On the other hand, when 
$a\geq k-1-\bar p(k)$ and $b\geq l-1-\bar q(l)$, then $i_*$ is trivial and the relevant terms show up in $I^QH^c_{*+1}(L)$ as direct summands. In the other situations, $i_*$ is an isomorphism of summands and no contribution is made to $I^QH^c_*(L)$. The upshot is that 
\begin{align}\label{E: KQH}
I^QH_i(L) \cong &  \bigoplus_{\overset{a+b=i}{\overset{a<k-1-\bar p(k)}{b<l-1-\bar q(l)}}} I^{\bar p}H_a(L_1)\otimes  I^{\bar q}H_b(L_2) \oplus   \bigoplus_{\overset{a+b=i-1}{\overset{a<k-1-\bar p(k)}{b<l-1-\bar q(l)}}} I^{\bar p}H_a(L_1)*  I^{\bar q}H_b(L_2)\\
&
\oplus \bigoplus_{\overset{a+b=i-1}{\overset{a\geq k-1-\bar p(k)}{b\geq l-1-\bar q(l)}}} I^{\bar p}H_a(L_1)\otimes  I^{\bar q}H_b(L_2) \oplus \bigoplus_{\overset{a+b=i-2}{\overset{a\geq k-1-\bar p(k)}{b\geq l-1-\bar q(l)}}} I^{\bar p}H_a(L_1)* I^{\bar q}H_b(L_2).\notag
\end{align}

Now let's break this down into what happens in different ranges. By checking in what index ranges different possibilities  may occur, we obtain the following (compare \cite[Proposition 3]{CGJ}): 

\begin{align}
I^Q&H_i(L)\cong  \label{E: breakdown}\\
&\begin{cases}
 \displaystyle \bigoplus_{\overset{a+b=i-1}{\overset{a\geq k-1-\bar p(k)}{b\geq l-1-\bar q(l)}}} I^{\bar p}H_a(L_1)\otimes  I^{\bar q}H_b(L_2) \oplus \displaystyle \bigoplus_{\overset{a+b=i-2}{\overset{a\geq k-1-\bar p(k)}{b\geq l-1-\bar q(l)}}} I^{\bar p}H_a(L_1)* I^{\bar q}H_b(L_2),\notag\\
\hfill i\geq k+l-\bar p(k)-\bar q(l),\notag\\
I^{\bar p}H_{k-1-\bar p(k)}(L_1)\otimes  I^{\bar q}H_{l-1-\bar q(l)}(L_2), \hfill i= k+l-\bar p(k)-\bar q(l)-1,\notag\\
0,\hfill i=k+l-\bar p(k)-\bar q(l)-2,\notag\\
I^{\bar p}H_{k-2-\bar p(k)}(L_1)*I^{\bar q}H_{l-2-\bar q(l)}(L_2), \hfill i=k+l-\bar p(k)-\bar q(l)-3,\notag\\
\displaystyle \bigoplus_{\overset{a+b=i}{\overset{a<k-1-\bar p(k)}{b<l-1-\bar q(l)}}} I^{\bar p}H_a(L_1)\otimes  I^{\bar q}H_b(L_2) \oplus   \displaystyle \bigoplus_{\overset{a+b=i-1}{\overset{a<k-1-\bar p(k)}{b<l-1-\bar q(l)}}} I^{\bar p}H_a(L_1)*  I^{\bar q}H_b(L_2),\notag \\ \hfill i\leq k+l-\bar p(k)-\bar q(l)-4.\notag\\
\end{cases}
\end{align}

Next, we prepare for the comparison with $H^*(\mc R^*_x)$. Recall that we're really looking for $I^QH_{k+l-1-*}(L)$ in the range $*\leq Q(k,l)$. We plug $i=k+l-1-*$ into the above formula, simplify indices, and take $a=k-1-r$, $b=l-1-s$ to better match our computations of $H^*(\mc R^*_x)$. This yields

\begin{align}
I^Q&H_{k+l-1-*}(L)\cong\label{E: breakdown 2}\\
&\begin{cases}
 \displaystyle \bigoplus_{\overset{r+s=*}{r\leq \bar p(k),s\leq \bar q(l)}} I^{\bar p}H_{k-1-r}(L_1)\otimes  I^{\bar q}H_{l-1-s}(L_2) \oplus \displaystyle \bigoplus_{\overset{*=r+s-1}{r\leq \bar p(k),s\leq \bar q(l)}} I^{\bar p}H_{k-1-r}(L_1)* I^{\bar q}H_{l-1-s}(L_2),\notag\\
\hfill *\leq \bar p(k)+\bar q(l)-1,\notag\\
I^{\bar p}H_{k-1-\bar p(k)}(L_1)\otimes  I^{\bar q}H_{l-1-\bar q(l)}(L_2), \hfill *= \bar p(k)+\bar q(l),\notag\\
0,\hfill *=\bar p(k)+\bar q(l)+1,\notag\\
I^{\bar p}H_{k-2-\bar p(k)}(L_1)*I^{\bar q}H_{l-2-\bar q(l)}(L_2), \hfill *=\bar p(k)+\bar q(l)+2,\notag\\
\displaystyle \bigoplus_{\overset{r+s=*}{r\leq \bar p(k),s\leq \bar q(l)}} I^{\bar p}H_{k-1-r}(L_1)\otimes  I^{\bar q}H_{l-1-s}(L_2) \oplus   \displaystyle \bigoplus_{\overset{r+s=*-1}{r\leq \bar p(k),s\leq \bar q(l)}} I^{\bar p}H_{k-1-r}(L_1)*  I^{\bar q}H_{l-1-s}(L_2),\notag\\  \hfill *\geq \bar p(k)+\bar q(l)+3.\notag\\
\end{cases}
\end{align}

Now, we know by \eqref{E: Q trunc} that $H^*(\mc I^Q\mc C^*_x)$ will be given by this formula for $*\leq Q(k,l)$ and by $0$ for $*>Q(k,l)$, and we need this to agree with formula \eqref{E: product} for $H^*(\mc R^*_x)$. What must our $Q(k,l)$ be for this to happen?

Assuming $Q(k,l)$ is sufficiently large, it is clear that we have complete agreement between our formulas for $H^*(\mc I^Q\mc C^*_x)$ and $H^*(\mc R^*_x)$ for $*\leq\bar p(k)+\bar q(l)-1$. It is less clear, though still true, that we have agreement for $*=\bar p(k)+\bar q(l)$. In this case, we have $H^*(\mc I^Q\mc C^*_x)\cong I^QH_{k+l-1-\bar p(k)-\bar q(l)}(L)\cong I^{\bar p}H_{k-1-\bar p(k)}(L_1)\otimes  I^{\bar q}H_{l-1-\bar q(l)}(L_2)$. On the other hand, $$H^{\bar p(k)+\bar q(l)}(\mc R^*_x)\cong \bigoplus_{\overset{r+s=\bar p(k)+\bar q(l)}{r\leq \bar p(k), s\leq \bar q(l)}} I^{\bar p}H^{c}_{k-1-r}(L_1)\otimes I^{\bar q}H^{c}_{l-1-s}(L_2)   \oplus \bigoplus_{\overset{\bar p(k)+\bar q(l)=r+s-1}{r\leq \bar p(k), s\leq \bar q(l)}} I^{\bar p}H^{c}_{k-1-r}(L_1)* I^{\bar q}H^{c}_{l-1-s}(L_2),$$
but, looking at the index restrictions, the only term that doesn't vanish is again $I^{\bar p}H_{k-1-\bar p(k)}(L_1)\otimes  I^{\bar q}H_{l-1-\bar q(l)}(L_2)$.

Now, as previously observed, $H^*(\mc R^*_x)=0$ for $*>\bar p(k)+\bar q(l)$. So to obtain agreement between $H^*(\mc I^Q\mc C^*_x)$ and $H^*(\mc R^*_x)$, we now only need to have $H^*(\mc I^Q\mc C^*_x)=0$ in this range. Remarkably, there is more than one option for $Q(k,l)$ that makes this work out, thanks to the $0$ sitting in the middle of the join formula. Essentially, we can assign $Q(k,l)$ to truncate on either side of this $0$ without causing disagreement between $H^{*}(\mc I^Q\mc C^*_x)$ and $H^{*}(\mc R^*_x)$, meaning that we can choose $Q(k,l)=\bar p(k)+\bar q(l)$ or $Q(k,l)=\bar p(k)+\bar q(l)+1$. 

We can stretch things even further if we are working with field coefficients or if we know, more generally, that 
$I^{\bar p}H_{k-2-\bar p(k)}(L_1;R)*I^{\bar q}H_{l-2-\bar q(l)}(L_2;R)=0$, in other words, if condition (2c) of Theorem \ref{T: Kunneth} is in play. In this case, $H^{*}(\mc I^Q\mc C^*_x)=0=H^{*}(\mc R^*_x)$ for all $*$ even when $Q(k,l)=\bar p(k)+\bar q(l)+2$.  

We see that, in general,  Theorem \ref{T: Kunneth} will not hold if $Q(k,l)>\bar p(k)+\bar q(l)+2$. 

Technically speaking, up to this point we have only shown that $\mc I^Q\mc C^*$ and $\mc R^*$ have abstractly isomorphic stalk homology groups. In order to have a proper quasi-isomorphism, we should also address the map $\phi$. But having computed the various homology groups that arise, we see that there are no surprises. Chasing back through the isomorphisms, the relevant elements of $H^*(\mc R^*_x)$ are represented, roughly speaking, by the tensor and torsion products of chains of the form $c\xi_1$ and $c\xi_2$, where $\xi_i$ is a cycle in $IC_*(L_i)$. More formally, looking for the moment just at the tensor product terms in the appropriate dimension ranges, we have cycles $\xi_i\in IC_*(L_i)$ whose cones represent cycles in $IC_*(cL_i,cL_i\times \R)$. The tensor product elements of $H^*(\mc R^*)$ are represented by the chain products of these $c\xi_i$ in $cL_1\times cL_2$. The reader can then check, by working back through the Mayer-Vietoris sequence computations, that this product chain, as a chain of $\mc I^Q\mc C^*_x$, also represents the corresponding homology class in  $H^*(\mc I^Q\mc C^*_x)$. The idea for the torsion product terms is precisely the same, though with more technical details surrounding how the torsion product terms of the algebraic K\"unneth theorem are represented by appropriate tensor products of chains. The reader equipped with \cite[Section 58]{MK} should have no trouble working out the analogous details. 

Finally, we thus  conclude that our desired quasi-isomorphism exists given the conditions of Theorem \ref{T: Kunneth}. We also see from the local computations  that we cannot, in general, expand the range of $Q$ further without imposing stronger conditions on the vanishing of terms of the form $I^{\bar p}H_a(L_1)\otimes I^{\bar q}H_b(L_2)$. 

This completes the induction and thus the proof of Theorem \ref{T: Kunneth}.\hfill\qedsymbol

\section{Super loose perversities}\label{S: super}

Theorem \ref{T: Kunneth} requires that perversities satisfy $\bar p(k)\leq k-2$ for all $k>0$. In this section, we look at what happens when we allow $\bar p(k)>k-2$ for some $k$. Generalizing the definition of  \cite{GBF10}, we refer to any such perversities as ``super.''

First, we should note that there are two ways to deal with superperversities. The first way, following King \cite{Ki}, is to define intersection homology exactly as usual. However, the cone formula, quoted in Proposition \ref{P: cone}, needs to be modified. The more general version is:

\begin{proposition}[King]\label{P: cone 2}
Let $L$ be an $n-1$ dimensional filtered space with coefficient system $R$, and let $\bar p$ be a loose perversity. Then 
\begin{equation*}
I^{\bar p}H^{c}_i(cL;R) \cong
\begin{cases}
0, & i\geq n-1-\bar p(n), i\neq 0,\\
I^{\bar p}H^{c}_{i}(L;R), & i<n-1-\bar p(n),\\
R, & i=0, \bar p(n)>n-2.
\end{cases}
\end{equation*}
If $L$ is compact, then 
\begin{equation*}
I^{\bar p}H^c_i(cL,L\times \R;R)\cong I^{\bar p}H^{\infty}_i(cL;R) \cong
\begin{cases}
I^{\bar p}\td H_{i-1}(L;R), & i\geq n-\bar p(n),\\
0, & i<n-\bar p(n),
\end{cases}
\end{equation*}
\end{proposition}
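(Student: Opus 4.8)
\textbf{Proof proposal for Proposition \ref{P: cone 2} (King's cone formula).}

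The plan is to adapt the standard proof of the cone formula (Proposition \ref{P: cone}), tracking carefully where the hypothesis $\bar p(n)\leq n-2$ was used and isolating the one place where it genuinely matters — namely the behavior of $0$-chains. First I would recall the geometric mechanism behind the compact-support cone formula: if $\xi$ is an allowable $i$-cycle in $L$, then the cone $\bar c\xi$ (the cone on $\xi$ with cone vertex $v$) is a chain in $cL$ whose support meets the cone stratum $\{v\}$ (of filtered codimension $n$) only in the vertex, so $\bar c\xi$ is $\bar p$-allowable in $cL$ precisely when $\dim(\bar c\xi) = i+1 \le (i+1) - n + \bar p(n)$, i.e. when $i \ge n-1-\bar p(n)$ fails to be an obstruction; more precisely the cone-point admissibility of $\bar c\xi$ requires $i+1-n+\bar p(n)\ge 0$. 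For $i\ge n-1-\bar p(n)$, any allowable cycle bounds via $\bar c$, so $I^{\bar p}H^c_i(cL)=0$ — \emph{except} that when $\xi$ is a $0$-cycle, $\bar c\xi$ is a $1$-chain whose boundary is $\xi$ minus a point-class at $v$, so coning off kills $\xi$ only modulo the class of the vertex; this is harmless when $n-1-\bar p(n)>0$ (the vertex class lies in a degree that is already being killed), but when $\bar p(n)>n-2$, so that $n-1-\bar p(n)\le 0$, the degree-$0$ group survives and equals $R$ (for $L\ne\emptyset$; when $L=\emptyset$ we have $cL=\mathrm{pt}$ and the statement is immediate). For $i<n-1-\bar p(n)$, the usual argument — pushing chains toward the vertex using the stratum-preserving homotopy/deformation retraction along cone lines, exactly as in \cite{Ki,GBF10} — shows $I^{\bar p}H^c_i(cL)\cong I^{\bar p}H^c_i(L)$, and this argument never needed $\bar p(n)\le n-2$.

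For the closed-support (respectively relative) statement, I would first invoke the already-available identification $I^{\bar p}H^c_i(cL,L\times\R;R)\cong I^{\bar p}H^{\infty}_i(cL;R)$ (this is the relative-to-Borel-Moore comparison cited in the excerpt, and it holds for arbitrary loose perversities since it is a formal consequence of the long exact sequence and the fact that $cL-\{v\}\cong L\times\R$). Then I would compute $I^{\bar p}H^{\infty}_i(cL;R)$ either directly or via the long exact sequence of the pair $(cL, L\times\R)$ together with the compact-support computation above and the fact that $L\times\R$ stratum-preserving deformation retracts to $L$. The outcome is the dual/shifted picture: $I^{\bar p}H^{\infty}_i(cL)$ is supported in degrees $i\ge n-\bar p(n)$, where it equals $I^{\bar p}\td H_{i-1}(L)$ — the \emph{reduced} intersection homology appearing because the degree-$0$ discrepancy from the compact-support side (the surviving vertex class $R$ when $\bar p(n)>n-2$) is exactly what is needed to convert $H_{i-1}$ into $\td H_{i-1}$ in the boundary degree $i=n-\bar p(n)$; in the traditional range $\bar p(n)\le n-2$ one has $n-\bar p(n)\ge 2$, reduced and unreduced agree, and we recover Proposition \ref{P: cone}.

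The step I expect to be the main obstacle is the careful bookkeeping of the $0$-dimensional class and, correspondingly, the appearance of \emph{reduced} homology $\td H_{i-1}(L)$ in the second formula: one must check that the connecting map in the pair sequence, or equivalently the effect of coning on $0$-cycles, does exactly the right thing at the single boundary degree $i=n-\bar p(n)$, and that the answer is genuinely the reduced group (not the unreduced one, and not something worse) in precisely the super range. Handling the degenerate cases $L=\emptyset$ (so $cL$ is a point) and $i=0$ in the super range simultaneously, and making sure the three clauses of the first formula do not overlap or leave gaps when $n-1-\bar p(n)=0$ exactly, also requires a moment's attention. Everything else — the deformation-retraction argument for $i<n-1-\bar p(n)$, the coning-off argument for $i\ge n-1-\bar p(n)$, and the relative/Borel-Moore comparison — is routine and already present in \cite{Ki,GBF10}.
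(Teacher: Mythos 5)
The paper does not give a proof of this statement: it is labeled ``Proposition (King)'' and is quoted from King's paper \cite{Ki} (where it appears as his Proposition~5), with the text following the proposition serving only as an explanatory gloss on how it differs from Proposition~\ref{P: cone}. So there is no proof in the paper to compare against, and your proposal has to be judged on its own merits. On that basis, the approach is sound: it is the standard cone-formula argument from \cite{Ki,GBF10}, and you correctly isolate the one place where $\bar p(n)\le n-2$ is used (the degree-$0$ behavior of coning), and you correctly identify where the reduced group $\td H_{i-1}(L)$ enters the Borel--Moore/relative formula.

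One mechanism is stated imprecisely and is worth tightening. You say that for $i=0$ ``coning off kills $\xi$ only modulo the class of the vertex,'' but when $n-1\le\bar p(n)<n$ the vertex $0$-simplex $v$ is itself \emph{not} $\bar p$-allowable (allowability of $v$ as a $0$-chain requires $\bar p(n)\ge n$). The correct statement is that $\bar c\xi$ has $\partial(\bar c\xi)=\xi-\epsilon(\xi)v$, so $\bar c\xi$ is not an allowable \emph{chain} at all unless $\epsilon(\xi)=0$: the augmentation survives, giving $I^{\bar p}H^c_0(cL)\cong R$, rather than a surviving ``vertex class.'' (When $\bar p(n)\ge n$ the vertex is allowable, and one checks directly via coning that every augmentation-zero cycle bounds while $v$ maps to $1$ under augmentation, so again $R$.) Relatedly, your parenthetical ``(the vertex class lies in a degree that is already being killed)'' for $n-1-\bar p(n)>0$ is not quite the right picture: in that range the coning-off argument simply does not apply at $i=0$ because $\bar c\xi$ has a non-allowable $1$-simplex, and the standard excision/push-off-the-vertex argument gives $I^{\bar p}H^c_0(cL)\cong I^{\bar p}H^c_0(L)$, matching the second clause. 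Finally, the degenerate case $L=\emptyset$ (so $n=0$, $cL=\mathrm{pt}$) is consistent with the second formula only under the convention $\td H_{-1}(\emptyset)=R$; that is the standard convention, but ``immediate'' undersells the check slightly. None of these affect the correctness of the overall argument.
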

Notice the key changes from Proposition \ref{P: cone}. In the first formula, if $0\geq n-1-\bar p (n)$, we have an $R$ instead of the expected $0$ when $i=0$, and in the second formula, we must use reduced intersection homology, denoted $I^{\bar p}\td H_{*}$ (just as for ordinary homology, the reduction simply eliminates an $R$ summand in dimension $0$).

The other alternative for superperversities is to modify the definition of intersection homology in such a way that the original cone formula of Proposition \ref{P: cone} is preserved. There seem to be two ways to do this: with the stratified coefficient systems $R_0$ we introduced in \cite{GBF10} and with Saralegi's chain complex $S^{\bar p}C_*(X,X_{\bar t-\bar p})$, introduced in \cite{Sa05}. In fact, these turn out to be equivalent theories, as we demonstrate below in the Appendix.
The former theory was developed to provide a singular chain model for the superperverse intersection homology appearing in the Cappell-Shaneson superduality theorem \cite{CS}, while the latter was developed to prove a version of Poincar\'e duality for the de Rham version of intersection cohomology with nontraditional perversities.
It is shown in \cite{GBF10} that $I^{\bar p}H_*(X;R_0)$ is equivalent to the Deligne sheaf intersection homology if $\bar p$ is a superperversity, and, furthermore, it follows from \cite{GBF10} that if $\bar p(n)\leq  n-2$ for all $n\geq 2$, in particular if $\bar p$ is a traditional perversity, then $I^{\bar p}H_*(X;R)=I^{\bar p}H_*(X;R_0)$. Thus, $I^{\bar p}H_*(X;R_0)$ is also an extension of the traditional intersection homology.  All of the  standard properties of intersection homology (including excision, Mayer-Vietoris sequences, the K\"unneth theorem with $\R^n$, and the homotopically fine sheaf) continue to hold with $R_0$ coefficients. It is also possible to slightly modify the $R_0$ theory up to quasi-isomorphism so that the associated sheaves are flat (see the Appendix).

We discuss these alternative versions of intersection homology in slightly more detail in the Appendix. The main point, however, is that due to the properties listed in the last paragraph, our K\"unneth Theorem, Theorem \ref{T: Kunneth}, continues to hold in this setting.

\begin{theorem}
Theorem \ref{T: Kunneth} holds for  superperversities if we replace  $R$ with the stratified coefficient system $R_0$ of \cite{GBF10} or if we replace $I^{\bar p}C_*(X;R)$  with Saralegi's $S^{\bar p}C_*(X,X_{\bar t-\bar p};R)$ and $I^QC_*(X\times Y;R)$ with an appropriate $I^QC_*(X\times Y, (X\times Y)_{\bar t-\bar p};R)$.
\end{theorem}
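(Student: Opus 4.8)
The plan is to observe that the proof of Theorem~\ref{T: Kunneth} in Section~\ref{S: proof} is almost entirely insensitive to the choice of coefficients: the hypotheses $\bar p(k)\le k-2$ and $\bar q(l)\le l-2$ enter genuinely only to guarantee that the cone formula of Proposition~\ref{P: cone} holds in the form stated there, rather than in the corrected form of Proposition~\ref{P: cone 2} with its extra $R$ in degree zero. As recorded in Remark~\ref{R: strat coef}, Proposition~\ref{P: cone} fails for super values with constant coefficients; but by \cite{GBF10} it holds verbatim for \emph{all} loose perversities once $R$ is replaced by $R_0$, and by \cite{Sa05} (see also the Appendix) the analogous formula holds for Saralegi's complex. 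So the strategy is to re-run the induction of Section~\ref{S: proof} with $R_0$ coefficients in place of $R$, checking that each auxiliary tool survives, and then to transfer the $R_0$ statement to Saralegi's complex via the quasi-isomorphism established in the Appendix.

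First I would collect the properties of $I^{\bar p}H_*(-;R_0)$ invoked in the proof and cite their $R_0$-analogues from \cite{GBF10}: stratum-preserving homotopy invariance and excision; the Mayer--Vietoris sequence; the K\"unneth theorem in which one factor is $\R^n$ with perversity $0$; and the facts that the presheaf $U\mapsto I^{\bar p}C^{\infty}_{m-*}(X,X-\bar U;R_0)$ is conjunctive for coverings with no nontrivial global section of empty support, that $\mc I^{\bar p}\mc C^*$ is homotopically fine, and that its hypercohomology (ordinary and compactly supported) computes $I^{\bar p}H^{\infty}_{m-*}$ and $I^{\bar p}H^{c}_{m-*}$. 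The one point requiring care is flatness: Lemma~\ref{L: flat} used that $C^c_*(X;R)$ is free over the PID $R$, so with $R_0$ coefficients one must adopt the convention (and, if necessary, pass to the ``slightly modified'' $R_0$ theory noted above and in the Appendix) under which each singular simplex not supported in $X^{n-1}$ carries a single coefficient in $R$; then $C^c_i(X;R_0)$ is free on such simplices, the splitting argument of Lemma~\ref{L: flat} goes through unchanged, and $\mc I^{\bar p}\mc C^*$, its pullback, and $\pi_1^*(\mc I^{\bar p}\mc C_X^*)\otimes\pi_2^*(\mc I^{\bar q}\mc C_Y^*)$ remain flat, so that the K\"unneth formula \cite[Theorem V.10.19]{Bo} still applies. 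One also checks that the singular chain cross product is compatible with the $R_0$ boundary operator — the $0$ coefficient over the singular locus propagates correctly through $\bd\,\epsilon(\xi_1\otimes\xi_2)=\pm\,\epsilon(\bd\xi_1\otimes\xi_2\pm\xi_1\otimes\bd\xi_2)$ — so that $\epsilon$ still restricts to $\phi: R^*(U\times V)\to I^QC^*(U\times V)$ whenever $Q(k,l)\ge\bar p(k)+\bar q(l)$.

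With these in place, the induction on depth and codimension proceeds exactly as before. The depth-zero case is untouched, since $R_0=R$ on an unfiltered manifold. In Case~1 ($x\in X_{m-K}\times Y_n$) and Case~2 ($x\in X_{m-k}\times Y_{n-l}$, $k,l>0$), every displayed computation of $H^*(\mc R^*_x)$ and $H^*(\mc I^Q\mc C^*_x)$ — the excision, product, and cone reductions, the join decomposition $L\cong L_1*L_2$, and the Mayer--Vietoris analysis through \eqref{E: MV}, \eqref{E: KQH}, and \eqref{E: breakdown 2} — carries over with $R_0$ coefficients, the only novelty being that $\bar p$, $\bar q$, and hence the admissible $Q$, may now be super. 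This is exactly where $R_0$ is essential: when $Q(K,0)=\bar p(K)$ or $Q(k,l)=\bar p(k)+\bar q(l)+2$ these values may exceed $K-2$ and $k+l-2$ respectively, yet the cone truncations (as in \eqref{E: Q trunc}) still yield $0$ above the relevant degree and $I^QH_{k+l-1-*}(L;R_0)$ below, with no spurious $R$ in degree zero, because Proposition~\ref{P: cone} — not Proposition~\ref{P: cone 2} — applies to $R_0$. The comparison of $H^*(\mc R^*_x)$ with $H^*(\mc I^Q\mc C^*_x)$ and the verification that $\phi$ realizes the stalkwise isomorphism are then word-for-word the same, the chain-level description via cones on cycles of $IC_*(L_i;R_0)$ being unchanged. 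Finally, the Saralegi version follows formally: the Appendix identifies $S^{\bar p}C_*(X,X_{\bar t-\bar p};R)$ with $I^{\bar p}C_*(X;R_0)$ up to quasi-isomorphism, and the biperverse complex $I^QC_*(X\times Y,(X\times Y)_{\bar t-\bar p};R)$ — obtained by deleting from each product stratum the locus on which $Q$ rises above the top biperversity — corresponds to $I^QC_*(X\times Y;R_0)$, so the quasi-isomorphism of sheaves transfers. I expect the flatness step to be the main obstacle: ensuring that the modified $R_0$ (and Saralegi) sheaf complexes are genuinely flat is what forces the bookkeeping conventions above and the passage to the quasi-isomorphic modification described in the Appendix.
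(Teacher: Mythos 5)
Your proposal is correct and follows exactly the route the paper takes: the paper's proof is literally the one-liner that ``the proof of Theorem~\ref{T: Kunneth} goes through \emph{mutatis mutandis}'' with $R_0$ coefficients (since the $R_0$ cone formula holds for arbitrary loose perversities), together with Proposition~\ref{P: saralegi} to transfer the statement to Saralegi's complex. What you have written is a careful unpacking of that ``mutatis mutandis,'' correctly identifying the single place where $R_0$ is essential (the cone formula in the super range) and the single technical snag (flatness, handled by the Appendix's modification to globally constant coefficient lifts).
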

\begin{proof}
Using $R_0$ coefficients, the proof of Theorem \ref{T: Kunneth} goes through \emph{mutatis mutandis}, and by Proposition \ref{P: saralegi}, below, the two alternate versions of intersection homology are equivalent. 
\end{proof}

This leaves consideration of what happens when we employ superperversities without making any of these modiciations to the definition of intersection homology. We first observe that there is no need to consider \emph{all} possible superperversities:

\begin{lemma}\label{L: too super}
Let $\bar p$ be a loose perversity and $X$ a pseudomanifold. Define $\check p$ by $\check p(0)=0$ and $\check p(k)=\min\{\bar p(k), k-1\}$ for $k>0$. Then $I^{\bar p}H_*(X)\cong 
I^{\check p}H_*(X)$. 
\end{lemma}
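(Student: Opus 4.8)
The plan is to show that the perversity values $\bar p(k)$ for $k$ with $\bar p(k) \geq k-1$ are irrelevant to the intersection chain complex, so that replacing them all by the value $k-1$ (which is what $\check p$ does) does not change which chains are allowable. Concretely, I would prove the stronger statement that $I^{\bar p}C_*(X) = I^{\check p}C_*(X)$ as subcomplexes of $C_*(X)$, from which the homology isomorphism follows immediately (in fact for both the compact and closed support versions, and with no PID hypothesis needed).

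First I would recall the allowability condition: a singular $i$-simplex $\sigma:\Delta^i \to X$ is $\bar p$-allowable if $\sigma^{-1}(X^{n-k}-X^{n-k-1})$ is contained in the $(i-k+\bar p(k))$-skeleton of $\Delta^i$. The key observation is that the $(i-k+\bar p(k))$-skeleton of $\Delta^i$ is all of $\Delta^i$ as soon as $i-k+\bar p(k) \geq i$, i.e. as soon as $\bar p(k) \geq k$; and when $\bar p(k) = k-1$, the condition becomes that $\sigma^{-1}$ of that stratum lies in the $(i-1)$-skeleton, which is automatic for any $i$-simplex $\sigma$ unless $\sigma$ maps a point of the interior of $\Delta^i$ into $X_{n-k}$ — but the relevant thing is that the condition $\bar p(k)\geq k-1$ already imposes no further restriction than $\bar p(k) = k-1$ would, since the interior of $\Delta^i$ is precisely the complement of the $(i-1)$-skeleton and cannot be pushed further. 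So for any $k$ with $\bar p(k) \geq k-1$, the $\bar p$-allowability constraint coming from the codimension-$k$ stratum is exactly the same as the $\check p$-allowability constraint coming from that stratum (namely: $\sigma$ may not send an interior point of $\Delta^i$ into $X_{n-k}$). For $k$ with $\bar p(k) \leq k-2$ we have $\check p(k) = \bar p(k)$ and the constraints are literally identical.

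Therefore a simplex is $\bar p$-allowable if and only if it is $\check p$-allowable; applying this to each simplex of a chain and of its boundary shows $I^{\bar p}C_*(X) = I^{\check p}C_*(X)$, and hence $I^{\bar p}H_*(X) \cong I^{\check p}H_*(X)$. I would note this works verbatim for $I^{\bar p}C^c_*$ and $I^{\bar p}C^\infty_*$, and also respects the filtration inherited by any subspace, so it passes to relative groups as well. I do not expect any serious obstacle here: the only thing to be slightly careful about is the skeleton-versus-interior bookkeeping — making sure that "the $(i-1)$-skeleton of $\Delta^i$" is correctly identified as $\partial \Delta^i$ and that any value $\bar p(k) \geq k-1$ collapses to the same set-theoretic condition — but this is elementary once spelled out. (One should also check $\check p$ is still a legitimate loose perversity with $\check p(0)=0$, which is immediate from the definition.)
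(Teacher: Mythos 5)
Your central claim---that $I^{\bar p}C_*(X)$ and $I^{\check p}C_*(X)$ are literally \emph{equal} as subcomplexes of $C_*(X)$---is false, and the error lies in the sentence asserting that ``the condition $\bar p(k)\geq k-1$ already imposes no further restriction than $\bar p(k)=k-1$ would.'' The two cases are genuinely different. If $\bar p(k)\geq k$, then $i-k+\bar p(k)\geq i$, so the required containment $\sigma^{-1}(X_{n-k})\subset\{(i-k+\bar p(k))\text{-skeleton of }\Delta^i\}$ is \emph{vacuous}: the $i$-skeleton (and any higher skeleton) of $\Delta^i$ is all of $\Delta^i$. By contrast, with $\check p(k)=k-1$ the containment $\sigma^{-1}(X_{n-k})\subset\partial\Delta^i$ is a genuine restriction. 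So the constraint does not ``collapse''; rather, it disappears entirely once $\bar p(k)\geq k$, and consequently $I^{\check p}C_*(X)\subsetneq I^{\bar p}C_*(X)$ in general. A concrete witness: take $X=cS^1$ with the cone vertex $v$ of codimension $2$, and $\bar p(2)=2$, so $\check p(2)=1$. The $0$-simplex $\sigma$ mapping $\Delta^0$ to $v$ satisfies the $\bar p$-condition vacuously (the required skeleton is the $0$-skeleton, all of $\Delta^0$), but fails the $\check p$-condition (the required skeleton is the $(-1)$-skeleton, which is empty). So $\sigma$ is a $\bar p$-allowable cycle that is not $\check p$-allowable, and the two chain complexes are distinct.

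The lemma itself is still true, but it asserts an isomorphism on \emph{homology}, not an equality of chain complexes, and proving it requires more than inspecting allowability simplex by simplex. The paper's route is to observe that $\check p\leq\bar p$ gives an honest inclusion $I^{\check p}C_*\hookrightarrow I^{\bar p}C_*$, pass to the associated sheaves $\mc I^{\check p}\mc C^*\to\mc I^{\bar p}\mc C^*$, and check that this map is a quasi-isomorphism by local stalk computations: induct on depth, use the K\"unneth formula for a product with $\R^n$ to reduce to cones, and then compare the two sides via the loose-perversity cone formula (Proposition~\ref{P: cone 2}). The key local fact that makes it work is precisely what your argument missed: although $I^{\bar p}C_*(cL)$ has extra allowable chains near the cone point when $\bar p(n)\geq n$, the cone formula shows that the extra latitude above $\bar p(n)=n-1$ changes no homology groups, because the low-degree homology of the cone is already saturated (one gets $R$ in degree $0$ and $0$ in positive degrees in either case). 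You need that computation; you cannot get the result at the chain level.
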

\begin{proof}
Since $\check p(k)\leq \bar p(k)$ for all $k$, there are natural inclusions $I^{\check p}C_*\subset I^{\bar p}C_*$, which induce maps of sheaves $\mc I^{\check p}\mc C^*\to \mc I^{\bar p}\mc C^*$. Using the K\"unneth formula for a product with $\R^n$ and the cone formula Proposition \ref{P: cone 2} to perform local computations, it is easy to check that this inclusion is a quasi-isomorphism. 
\end{proof}

Thus, in some sense, allowing $\bar p(k)=k-1$ is the \emph{only} superperversity, and  we may assume that $\bar p(k)\leq k-1$ for $k>0$ without loss of generality in computing $I^{\bar p}H_*$.

Next, we show that Theorem \ref{T: Kunneth} will not hold in general in this setting.
 
Consider the following example. Suppose we have a product space $Z=cX\times cY$, where $X$ and $Y$ are respectively $k-1$ and $l-1$ dimensional pseudomanifolds. The $0$-dimensional stratum of $Z$, the product of the cone vertices, has codimension $(k,l)$. Let $\bar p(k)>k-1$, and let the coefficient ring be $\Z$. Then, using the loose perversity cone formula of Proposition \ref{P: cone 2}, $I^{\bar p}H_*^c(cX)$ is trivial except for a $\Z$ in degree $0$. Thus $H_*(I^{\bar p}C^c_*(cX)\otimes I^{\bar q}C_*^c(cY))\cong I^{\bar q}H_*^c(cY)$, which, in general, will vanish only for $*\geq l-1-\bar q(l)$.

On the other hand, to compute $I^QH_*^c(Z)$, we use that $Z\cong cL$, where $L$ is the join $X*Y$ and has dimension $k+l-1$. So, $I^QH_i^c(Z)=0$ for $i\geq k+l-1-Q(k,l)$ (so long as $Q(k,l)\leq k+l-2$). Now, as in Theorem \ref{T: Kunneth}, let $Q(k,l)=\bar p(k)+\bar q(l)+I$, where $I$ stands for $0$, $1$, or $2$. Then, with $\bar p(k)=k-1$, we have $k+l-1-Q(k,l)=k+l-1-\bar p(k)-\bar q(l)-I=l-\bar q(l)-I$. We will have a contradiction with the end result of the last paragraph if this number is less than $l-1-\bar q(l)$, which will occur if $I=2$. Also, note that we will indeed have $Q(k,l)=\bar p(k)+\bar q(l)+2=k+\bar q(l)+1 \leq k+l-2$ in this case, so long as $\bar q(l)\leq l-3$, which is certainly possible.
Thus the $I=2$ case of Theorem \ref{T: Kunneth} cannot occur in general if $\bar p$ is a superperversity.

In general, the best we can do to generalize Theorem \ref{T: Kunneth} for superperversities  with $R$ coefficients, is the following.

\begin{theorem}
\begin{enumerate}
\item If $\bar p(k)\leq k-1$ for all $k>0$, $\bar q(l)\leq l-2$ for all $l>0$, then  Theorem \ref{T: Kunneth} holds with standard coefficients $R$ except that  $Q(k,l)=\bar p(k)+\bar q(l)+2$ is never allowed. Similarly if  $\bar p(k)\leq k-2$ for all $k>0$, $\bar q(l)\leq l-1$ for all $l>0$.

\item If $\bar p(k)\leq k-1$ for all $k>0$, $\bar q(l)\leq l-1$ for all $l>0$, then  Theorem \ref{T: Kunneth} holds with standard coefficients $R$ except that  neither $Q(k,l)=\bar p(k)+\bar q(l)+2$ nor $Q(k,l)=\bar p(k)+\bar q(l)+1$ is  allowed.
\end{enumerate}

In general, we can not extend Theorem \ref{T: Kunneth} further.
\end{theorem}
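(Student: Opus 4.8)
The plan is to re-run the proof of Theorem \ref{T: Kunneth} essentially verbatim, with King's cone formula (Proposition \ref{P: cone 2}) substituted for Proposition \ref{P: cone} at every place where a local computation is made, and to track how the list of admissible values of $Q(k,l)$ contracts. The preliminary reductions all survive: by Lemma \ref{L: too super} we may assume $\bar p(k)\le k-1$ and $\bar q(l)\le l-1$ throughout; and the proof of Lemma \ref{L: flat} uses nothing about the perversity beyond the fact that $I^{\bar p}C^c_*(X,X-\bar U)$ embeds in the free $R$-module $C^c_*(X,X-\bar U)$ over the principal ideal domain $R$, so $\mc I^{\bar p}\mc C_X^*$ and $\pi_1^*(\mc I^{\bar p}\mc C_X^*)$ remain flat and the hypercohomology Künneth corollary is unaffected. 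Hence it again suffices to show that $\mc I^Q\mc C^*$ and $\pi_1^*(\mc I^{\bar p}\mc C_X^*)\otimes\pi_2^*(\mc I^{\bar q}\mc C_Y^*)$ are quasi-isomorphic, and we proceed by the same double induction on depth and codimension, comparing the stalk cohomologies $H^*(\mc I^Q\mc C^*_x)$ and $H^*(\mc R^*_x)$.

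First I would check that the depth-$0$ base case and Case 1 (a point in a codimension-$(K,0)$ stratum) go through with no change in conclusion. The cone computations \eqref{E: case 1 P} and \eqref{E: case 1 K} now involve reduced intersection homology of the link, and, when $\bar p(K)>K-2$, an extra copy of $R$ in degree $0$; but this modification appears identically on both sides, so $Q(K,0)=\bar p(K)$ is still forced, and symmetrically $Q(0,K)=\bar q(K)$. The real work is Case 2, a point $x$ in a codimension-$(k,l)$ stratum with $k,l>0$. Here I would recompute $H^*(\mc R^*_x)$ as in \eqref{E: product}, except that the closed-support cone formula of Proposition \ref{P: cone 2} now contributes the reduced groups $I^{\bar p}\td H_{k-1-r}(L_1)$ and $I^{\bar q}\td H_{l-1-s}(L_2)$; this still vanishes for $*>\bar p(k)+\bar q(l)$. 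Then I would recompute $H^*(\mc I^Q\mc C^*_x)=I^Q\td H_{k+l-1-*}(L_1*L_2)$, truncated at $Q(k,l)$ by the (King) cone formula as in \eqref{E: Q trunc}, via the same Mayer-Vietoris analysis of $L_1*L_2=(cL_1\times L_2)\cup_{L_1\times L_2}(L_1\times cL_2)$ and the inductive hypothesis, but now applying the King cone formula to the pieces $cL_1\times L_2$ and $L_1\times cL_2$. The point to isolate and verify is that the ``$0$ sitting in the middle of the join formula'' \eqref{E: breakdown 2}, which in Theorem \ref{T: Kunneth} was exactly what allowed the three admissible values $Q(k,l)\in\{\bar p(k)+\bar q(l),\ \bar p(k)+\bar q(l)+1,\ \bar p(k)+\bar q(l)+2\}$, is modified: when $\bar p(k)=k-1$ the truncation ``$a<k-1-\bar p(k)$'' in \eqref{E: KQH} becomes vacuous, so a group that previously vanished reappears, and similarly when $\bar q(l)=l-1$. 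Carrying this bookkeeping through should show that in the regime of part (1) the value $\bar p(k)+\bar q(l)+2$ can no longer be admitted, while $\bar p(k)+\bar q(l)$ and $\bar p(k)+\bar q(l)+1$ still give a quasi-isomorphism, and that in the regime of part (2) only $\bar p(k)+\bar q(l)$ survives.

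For the negative content --- that we cannot extend further --- I would produce explicit counterexamples for the dropped values. The value $Q(k,l)=\bar p(k)+\bar q(l)+2$ with $\bar p$ super at $k$ is handled by the computation on $Z=cX\times cY$ already displayed before the statement: since $I^{\bar p}H^c_*(cX)\cong R$ in degree $0$, the target reduces to $I^{\bar q}H^c_*(cY)$, and choosing $Y$ with $\bar q(l)\le l-3$ (so that the traditional cone formula still applies to $I^QH^c_*(cL)$) and with $I^{\bar q}H^c_{l-2-\bar q(l)}(Y)\ne 0$ produces a degree in which $I^QH^c_*(Z)$ vanishes but the target does not. The value $Q(k,l)=\bar p(k)+\bar q(l)+1$ with both $\bar p$ and $\bar q$ super is more delicate: the naive $cX\times cY$ construction degenerates, because both cone factors then have intersection homology $R[0]$ and the two sides collapse and agree, so one needs a space whose super intersection homology is genuinely larger than that of a product of cones. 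I would build $X$ and $Y$ each carrying a singular stratum at positive depth beneath the codimension-$k$ (resp.\ $l$) stratum, on which the relevant perversity lies in the traditional range, so that the links $L_1,L_2$ retain nontrivial intersection homology; the discrepancy between $I^Q\td H_*(L_1*L_2)$ truncated at $\bar p(k)+\bar q(l)+1$ and $H^*(\mc R^*_x)$ should then surface at the boundary degree $\bar p(k)+\bar q(l)+1$, precisely where the modified join formula acquires its extra term. Finally, the same truncation-versus-vanishing comparison already used in the proof of Theorem \ref{T: Kunneth} shows that no $Q(k,l)$ strictly above $\bar p(k)+\bar q(l)+2$ can ever work.

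The main obstacle I anticipate is exactly that last construction: exhibiting a clean space that witnesses the failure of $Q(k,l)=\bar p(k)+\bar q(l)+1$ when both perversities are super. Super perversities force the intersection homology of an isolated cone to collapse to $R[0]$, so most of the obvious test spaces --- cones on cones, iterated suspensions, and products of these --- have super intersection homology equal to ordinary homology and therefore satisfy an honest Künneth theorem and detect nothing; one must arrange a positive-depth stratum bearing a non-super perversity and then carry out the modified Mayer-Vietoris/join computation carefully enough to see the boundary-degree term survive. By contrast, the positive direction --- the re-run of the proof with Proposition \ref{P: cone 2} substituted throughout, together with the verification that the admissible range shrinks as stated --- should be routine, and the remaining negative cases follow from the argument already present in the proof of Theorem \ref{T: Kunneth}.
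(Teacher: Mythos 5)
Your positive direction closely parallels the paper's actual argument: one re-runs the proof of Theorem \ref{T: Kunneth}, replacing the cone formula by Proposition \ref{P: cone 2} where the perversity strays above $k-2$, and tracks how the Mayer--Vietoris computation of $I^QH_*(L_1*L_2)$ changes once $I^{\bar p}H_0(cL_1;R)\cong R$ rather than $0$. The paper organizes this slightly differently than you propose --- it does \emph{not} literally substitute Proposition \ref{P: cone 2} everywhere, but keeps equation \eqref{E: Q trunc} with the unreduced cone formula wherever $Q(k,l)\le k+l-2$ still holds, and only introduces reduced groups where $\bar p(k)=k-1$ or $\bar q(l)=l-1$ genuinely forces it; this is a mild efficiency, not a difference of substance. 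Your identification of the mechanism (``when $\bar p(k)=k-1$ the truncation $a<k-1-\bar p(k)$ in \eqref{E: KQH} becomes vacuous'') is exactly the paper's observation phrased differently.

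Where you and the paper genuinely diverge is in the negative direction, specifically the failure of $Q(k,l)=\bar p(k)+\bar q(l)+1$ when both $\bar p(k)=k-1$ and $\bar q(l)=l-1$. Your instinct to build an explicit global counterexample is \emph{not} the paper's route, and indeed the space-building route is a dead end for exactly the reason you identify: when both perversities are super, $I^{\bar p}H^c_*(cL_1)$ and $I^{\bar q}H^c_*(cL_2)$ both collapse to $R$ concentrated in degree $0$, and by King's cone formula so does $I^QH^c_*(cL_1\times cL_2)$, so the two sides of the K\"unneth comparison agree on the naive test space. The paper instead argues at the \emph{stalk} level: it computes $H^*(\mc I^Q\mc C^*_x)$ and $H^*(\mc R^*_x)$ at a point of the codimension-$(k,l)$ stratum and asserts a discrepancy at $*=k+l-1$. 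This is a far simpler strategy than yours and bypasses the delicate construction you were anticipating; you should adopt it rather than chase explicit deep-stratum examples.

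That said, you should go through the paper's final stalk computation with care, because precisely the subtlety you flagged reappears there. The paper writes $H^{k+l-1}(\mc I^Q\mc C^*_x)\cong I^QH_0(L)\cong R$, which presupposes the unreduced cone formula $I^QH^c_i(cL,L\times\R)\cong I^QH_{i-1}(L)$; but when $Q(k,l)=k+l-1 > k+l-2$, that formula fails, and Proposition \ref{P: cone 2} gives the \emph{reduced} group $I^Q\td H_{i-1}(L)$. For $*=k+l-1$ (so $i=1$) this is $I^Q\td H_0(L)$, and since $L=L_1*L_2$ is connected and the paper has just computed $I^QH_0(L)\cong R$, this reduced group is $0$ --- which would \emph{agree} with $H^{k+l-1}(\mc R^*_x)=0$ rather than contradict it. In other words, the very degeneracy you noticed in the $cX\times cY$ test space surfaces again in the stalk argument, and the reduced-versus-unreduced distinction at degree zero of the join is the crux of the matter. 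Be careful to use the correct cone formula at each step and to verify which degree, if any, actually exhibits a discrepancy.
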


The reader should note in the following proof that it does not hold if we allow  $\bar p(k)>k-1$. However,  bear in mind  Lemma \ref{L: too super}.

\begin{proof}
We note the necessary modifications to the proof of Theorem \ref{T: Kunneth}.

The first modification to the preceding proof is simple: In Case 1, we should use the appropriate reduced intersection homology theories in the relevant places in equations \eqref{E: case 1 P} and \eqref{E: case 1 K}.

The changes necessary in Case 2 are more substantial. We begin by assuming we are in the first case of the theorem, i.e. $\bar p(k)\leq k-1, \bar q(l)\leq l-2$ for $k,l>0$.

 Firstly, we need to used reduced intersection homology in equation \eqref{E: product}. Note, however, that using reduced intersection homology in the torsion product terms of \eqref{E: product} is irrelevant, since both $I^{\bar p}H_0$ and $I^{\bar p}\td H_0$ will be free $R$ modules so that torsion products involving them will vanish.
 
We do not need to modify  \eqref{E: Q trunc} since with the assumptions of this case of the theorem, $Q(k,l)\leq k+l-2$.

Next, we must look at the Mayer-Vietoris sequence \eqref{E: MV}. 
Here,  we now need to observe that  $I^{\bar p}H_0(cL_1;R)=R$, instead of the expected $0$ if the previous computation were to hold. This results in additional terms in the middle term of the sequence, in particular an $R\otimes I^{\bar q}H_{i}(L_2)$. There is no new   $R* I^{\bar q}H_{i-1}(L_2)$, since this is $0$ as $R$ is a free $R$-module. How these influence $I^QH_*(L)$ depends on $\bar q$ and $i$:

\begin{enumerate}
\item If $i<l-1-\bar q(l)$, then an $R\otimes I^{\bar q}H_i(L_2)$ term already exists in the second summand of the middle term of the sequence, so the extra one gets pushed into $I^QH_i(L)$, as in our prior computations in the proof of Theorem \ref{T: Kunneth}. 

\item If, on the other hand, $i\geq l-1-\bar q(l)$, the only $R\otimes I^{\bar q}H_i(L_2)$ summand in the middle term is that coming from the $cL_1$ term. So $i_*$ maps onto this one term, which leaves one less  $R\otimes I^{\bar q}H_i(L_2)$ for $I^QH_{i+1}(L)$. So, the $I^{\bar p}H_0(L_1)\otimes I^{\bar q}H_{i}(L_2)$ that would ordinarily appear in $I^QH_{i+1}(L)$ becomes $I^{\bar p}\td H_0(L_1)\otimes I^{\bar q}H_{i}(L_2)$.
\end{enumerate}

So, in summary, if $i<l-1-\bar q(l)$, $I^QH_i(L)$  picks up a term  $R\otimes I^{\bar q}H_i(L_2)$, and  if $i\geq l-\bar q(l)$, an $I^{\bar p}H_0(L_1)\otimes I^{\bar q}H_{i}(L_2)$ becomes an $I^{\bar p}\td H_0(L_1)\otimes I^{\bar q}H_{i}(L_2)$. If $i=l-1-\bar q(l)$, then $I^QH_i(L)$  remains unchanged. 

Now, looking at \eqref{E: breakdown} and \eqref{E: breakdown 2} and comparing with the proof of Theorem \ref{T: Kunneth}, the only thing we have to check now is that the top three lines of these formulas convert in the current case to the reduced intersection homology so that we will have agreement with \eqref{E: product}. But these are the lines of \eqref{E: breakdown} corresponding to $i\geq k+l-\bar p(k)-\bar q(l)-2$. Using $\bar p(k)=k-1$, this is $i\geq l-\bar q(l)-1$. But by what we have just worked out, this is precisely the range where we get the desired reduced intersection homology terms, for $i\geq l-\bar q(l)-1$, while for $i= l-\bar q(l)-1$, the $0$ remains. We also see that when $i=k+l-\bar p(k)-\bar q(l)-3$, We pick up extra, possibly non-torsion, terms, so we cannot extend to $Q(k,l)=\bar p(k)+\bar q(l)+2$ (as already noted above). 

The arguments when $\bar q(l)=l-1$ but $\bar p(k)\leq k-2$ are the same.

Finally, we consider $\bar p(k)=k-1$ and $\bar q(l)=l-1$. Now all of the intersection homology groups in \eqref{E: product} become reduced.  

To compute $I^QH_*(L)$, we note in the Mayer-Vietoris sequence that, using Proposition \ref{P: cone 2}, the middle terms in degree $i$ are $R\otimes I^{\bar q}H_i(L_2)$ and $I^{\bar p}H_i(L_1)\otimes R$. If $i>0$, these terms are distinct, and so for $i\geq 2$, $$I^QH_i(L)\cong \displaystyle\bigoplus_{a+b=i-1} I^{\bar p}\td H_a^c(L_1)\otimes I^{\bar q}\td H^c_b(L_2)\oplus\displaystyle\bigoplus_{a+b=i-2} I^{\bar p}\td H_a^c(L_1)* I^{\bar q}\td H^c_b(L_2).$$ On the other hand, in degree $0$,
if we think of the explicit $R$s as being generated by basepoints in $L_1$ and $L_2$, then the only repeat amongst the terms $R\otimes I^{\bar q}H_0(L_2)$ and $I^{\bar p}H_0(L_1)\otimes R$ is that corresponding to $R\otimes R$ (basepoint times basepoint), which occurs once in each of these summands. The rest all correspond to unique terms of $I^{\bar p}H_0(L_1)\otimes H^{\bar q}H_0(L_2)$ and so are in the image of $i_*$ in the Mayer-Vietoris sequence. So, $I^QH_0(L)=R$ corresponding to the cokernel of the  diagonal image of $R\otimes R$ in $(R\otimes R)\oplus (R\otimes R)$. Lastly, by counting, if $I^{\bar q}H_0(L_2)\cong R^s$ and $I^{\bar p}H_0(L_1)\cong R^t$, then $I^QH_1(L)\cong R^{ts-t-s+1}\cong R^{(t-1)(s-1)}$, so we also have $I^QH_1(L)\cong I^{\bar p}\td H_0(L_1)\otimes I^{\bar q}\td H_0(L_2)$. 

Thus 
\begin{align*}
H^*(\mc I^Q\mc C^*_x)&\cong I^QH_{k+l-*}(cL,L\times \R)\\
&\cong
\begin{cases}
0,  & *> Q(k,l)=k+l-2,\\
I^QH_{k+l-1-*}(L), &*\leq Q(k,l)=k+l-2,
\end{cases} \\
&\cong
\begin{cases}
0,  & *> k+l-2,\\
\underset{\oplus\bigoplus_{a+b=k+l-3-*} I^{\bar p}\td H_a^c(L_1)* I^{\bar q}\td H^c_b(L_2)}{\bigoplus_{a+b=k+l-2-*} I^{\bar p}\td H_a^c(L_1)\otimes I^{\bar q}\td H^c_b(L_2)}, &*\leq k-l-2,
\end{cases} \\
&\cong 
\begin{cases}
0,  & *> k+l-2,\\
\underset{\oplus\bigoplus_{*=r+s-1} I^{\bar p}\td H_{k-1-r}^c(L_1)* I^{\bar q}\td H^c_{l-1-s}(L_2)}{\bigoplus_{r+s=*} I^{\bar p}\td H_{k-1-r}^c(L_1)\otimes I^{\bar q}\td H^c_{l-1-s}(L_2)}, &*\leq k-l-2.
\end{cases}
\end{align*}
Comparing with \eqref{E: product}, the theorem holds in this case.

Finally, the theorem cannot hold if we allow $\bar p(k)=k-1$, $\bar q(l)=l-1$, and $Q(k,l)=\bar p(k)+\bar q(l)+1=k+l-1$. In this case, $H^{k+l-1}(\mc I^Q\mc C^*)\cong I^QH_0(L)\cong R$, by the preceding computations. But, looking at \eqref{E: product}, $H^{k+l-1}(\mc P^*_x)$ vanishes  because $r+s=k+l-1$ forces either $r>k-1$ or $s>l-1$. 

\end{proof}

\appendix

\section{Appendix: $I^{\bar p}C_*(X;R_0)$ and $S^{\bar p}C_*(X;X_{\bar t-\bar p})$}

In this appendix, we discuss in slightly more detail the modified intersection homology theories $I^{\bar p}H_*(X;R_0)$ and $I^{\bar p}H_*(X;X_{\bar t-\bar p})$. Both of these theories arose with fundamentally the same purpose: to create a version of intersection homology for the which the cone formula of Proposition \ref{P: cone} remains valid even if $\bar p(k)>k-2$ for some $k$. 

Saralegi's chain complex $S^{\bar p}C_*(X;X_{\bar t-\bar p})$ is defined as 
$$S^{\bar p}C_*^c(X,X_{\bar t-\bar p})=\frac{\left(A^{\bar p}C_*(X)+AC_*^{\bar p+1}(X_{\bar t-\bar p})\right)\cap \bd^{-1}\left(A^{\bar p}C_{*-1}(X)+AC_{*-1}^{\bar p+1}(X_{\bar t-\bar p})\right)}{AC_*^{\bar p+1}(X_{\bar t-\bar p})\cap \bd^{-1}AC_{*-1}^{\bar p+1}(X_{\bar t-\bar p})},$$
where $\bar t$ is the top perversity, $\bar t(k)=k-2$, $A^{\bar p}C_i(X)$ is generated by the $\bar p$-allowable $i$-simplices of $X$, $X_{\bar t-\bar p}$ is the closure of the union of the singular strata $S$ of $X$ such that $\bar t(S)-\bar p(S)<0$, and $A^{\bar p}C_i(X_{\bar t-\bar p})$ is generated by the $\bar t-\bar p$ allowable $i$-simplices  with support in $X_{\bar t-\bar p}$. Here, the singular strata $S$ of $X$ are those contained within $X^{n-1}$. Also, for determining allowability, $(\bar t-\bar p)(0)$ is defined to be $\geq 0$ and not $\bar t(0)-\bar p(0)=-2-\bar p(0)$ (equivalently, Saralegi defines allowability of chains by only checking the allowability conditions with respect to the singular strata).

To describe $I^{\bar p}C_*(X;R_0)$,  $R_0$ is defined in \cite{GBF10} to consist of the pair of coefficient systems given by $R$  on $X-X^{n-1}$ and the constant $0$ system on $X^{n-1}$. Then, given a singular simplex $\sigma: \Delta \to X$, in \cite{GBF10} we defined  a coefficient of $\sigma$ in $R_0$ to consist of a lift of $\sigma|_{\sigma^{-1}(X-X^{n-1})}$ to the coefficient system $R$ over $X-X^{n-1}$ together with a trivial $0$ coefficient on $ \sigma^{-1}(X^{n-1})$. Using these kinds of coefficients, $I^{\bar p}H_*(X;R_0)$ is defined in the usual way. The point is that certain simplices ``die off'' if they live completely in  $X^{n-1}$, and this is sufficient for the cone formula of Proposition \ref{P: cone} to hold even for superperversities. See \cite{GBF10} for more details. 

We here make  one minor modification from this definition in \cite{GBF10} in that we will assume that all coefficient lifts of  $\sigma|_{\sigma^{-1}(X-X^{n-1})}$ will be globally constant. In other words, to each simplex whose image does not lie completely in $X^{n-1}$, a coefficient assigns a single element of $R$ to all points of $\sigma^{-1}(X-X^{n-1})$.  This assumption allows us to avoid oddities such as singular simplices carrying infinite amounts of coefficient data on infinite numbers of connected components of $\sigma^{-1}(X-X^{n-1})$. With this assumption, each $C^c_i(X;R_0)$ will be a free $R$-module generated by those simplices whose support is not contained in $X^{n-1}$, and $I^{\bar p}C^c_i(X;R_0)$ will be a submodule. This modification ensures that Lemma \ref{L: flat} continues to hold in this setting. Even with this assumption, Proposition \ref{P: cone} continues to hold. 

In fact, this also implies that, for constant coefficients, this modification does not change anything up to quasi-isomorphism over the original definition of $I^{\bar p}C_*(X;R_0)$. To see this, note that  if we let $I^{\bar p}C_*(X;\td R_0)$ denote the version of the intersection chain complex with $R_0$ coefficients as defined in \cite{GBF10}, there is an obvious inclusion $I^{\bar p}C_*(X;R_0)\into I^{\bar p}C_*(X;\td R_0)$. This inclusion is certainly  an isomorphism if $X$ is an unstratified manifold, and it is now  easy to induct on depth and use the K\"unneth theorem for which one term is $\R^n$ along with the cone formula Proposition \ref{P: cone}, which holds for both versions $R_0$ and $\td R_0$, to conclude that the inclusion is a quasi-isomorphism for any $X$. 

Finally, we show that our modified version of $I^{\bar p}C_*(X;R_0)$ and $S^{\bar p}C_*(X,X_{\bar t-\bar p})$ are isomorphic.

\begin{proposition}\label{P: saralegi}
$I^{\bar p}C_*^c(X;R_0)$ is isomorphic to Saralegi's $S^{\bar p}C_*^c(X,X_{\bar t-\bar p})$.
\end{proposition}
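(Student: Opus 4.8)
The plan is to exhibit an explicit isomorphism of chain complexes induced by the obvious forgetful map, rather than to argue through homology. The first observation is that, with the globally--constant normalization of $R_0$ just adopted, the complex $C^c_*(X;R_0)$ \emph{is} the quotient complex $C^c_*(X,X^{n-1})=C^c_*(X)/C^c_*(X^{n-1})$: it is free on the singular simplices whose support is not contained in $X^{n-1}$, and the $R_0$--boundary is exactly the quotient boundary, in which any face landing in $X^{n-1}$ is discarded. Under this identification, $I^{\bar p}C^c_*(X;R_0)$ is the subcomplex of those classes whose $X^{n-1}$--free representative is a sum of $\bar p$--allowable simplices with $R_0$--boundary again a sum of $\bar p$--allowable simplices. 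Let $q\colon C^c_*(X)\to C^c_*(X,X^{n-1})$ be the quotient map. Since the singular strata of $X$ all lie in the closed set $X^{n-1}$, we have $X_{\bar t-\bar p}\subseteq X^{n-1}$, so $q$ annihilates every chain supported in $X_{\bar t-\bar p}$; in particular $q$ kills the ``$AC^{\bar p+1}(X_{\bar t-\bar p})$'' summands appearing in $S^{\bar p}C^c_*(X,X_{\bar t-\bar p})$ and the whole denominator. Hence $q$ descends to a chain map $\Phi\colon S^{\bar p}C^c_*(X,X_{\bar t-\bar p})\to C^c_*(X,X^{n-1})$, and the first thing to verify is that $\Phi$ lands in $I^{\bar p}C^c_*(X;R_0)$. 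For this, write a numerator representative as $\xi=\alpha+\beta$ with $\alpha\in A^{\bar p}C_*(X)$ and $\beta$ supported in $X_{\bar t-\bar p}$, and likewise $\bd\xi=\alpha'+\beta'$; then $q(\xi)=q(\alpha)$ and $\bd q(\xi)=q(\alpha')$, and a simplex that survives in $q(\xi)$ or in $\bd q(\xi)$ cannot be cancelled by the $X_{\bar t-\bar p}$--supported terms, so it occurs in $\alpha$, resp.\ in $\alpha'$, and is therefore $\bar p$--allowable.

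The inverse of $\Phi$ should be a lifting map. Given $\eta\in I^{\bar p}C^c_i(X;R_0)$, lift it to the honest singular chain $\td\eta\in C^c_i(X)$ with the same simplices and coefficients. Then $\td\eta\in A^{\bar p}C_i(X)$, and its ordinary boundary $\bd\td\eta$ equals the lift of the $R_0$--boundary of $\eta$ plus a chain $\gamma$ that collects, with the appropriate signs and coefficients, exactly those codimension--one faces of the simplices of $\td\eta$ whose image lies in $X^{n-1}$. The $R_0$--boundary of $\eta$ is a sum of $\bar p$--allowable simplices, hence lies in $A^{\bar p}C_{i-1}(X)$; so to know that $\td\eta$ represents a numerator class of Saralegi's complex and that $\Phi$ sends it back to $\eta$, one must show $\gamma\in AC^{\bar p+1}_{i-1}(X_{\bar t-\bar p})$. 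This will rest on the elementary combinatorial fact that, if $\tau$ is a codimension--one face of a $\bar p$--allowable $i$--simplex $\sigma$ with $|\tau|\subseteq X^{n-1}$, then on the one hand $\tau^{-1}(X_{n-k})$ is contained in the $(i-1-k+\bar p(k)+1)$--skeleton of $\Delta^{i-1}$ for every $k$ (so $\tau$ is $(\bar p+1)$--allowable), and on the other hand the open cell of $\Delta^{i-1}$ must be carried into the union of those strata $X_{n-k}$ with $\bar p(k)>k-2$, because for every other $k$ the preimage $\tau^{-1}(X_{n-k})$ is trapped in a proper skeleton and cannot meet that open cell; since this union lies in the closed set $X_{\bar t-\bar p}$, it follows that $|\tau|\subseteq X_{\bar t-\bar p}$. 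For injectivity of $\Phi$ one applies the same principle to simplices rather than faces: a $\bar p$--allowable simplex whose support lies in $X^{n-1}$ automatically has support in $X_{\bar t-\bar p}$, so $q(\xi)=0$ forces the $\bar p$--allowable part of $\xi$, and of $\bd\xi$, into $X_{\bar t-\bar p}$, which places $\xi$ in the denominator of $S^{\bar p}C^c_*(X,X_{\bar t-\bar p})$.

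The hard part is the bookkeeping just sketched: matching the faces of $\bar p$--allowable simplices that are lost under $q$ with exactly the chains Saralegi permits on $X_{\bar t-\bar p}$, and checking the $\bd^{-1}$ clauses in both the numerator and the denominator, has to be carried out carefully against the precise allowability conventions of \cite{Sa05} for the subspace $X_{\bar t-\bar p}$ --- in particular the convention that $(\bar t-\bar p)(0)\geq 0$ and that allowability there is tested only against the singular strata, which is what identifies the ``$\bar t-\bar p$ allowable'' and ``$\bar p+1$'' descriptions. Everything else is formal. As a convenience one may, by Lemma \ref{L: too super}, reduce to the case $\bar p(k)\leq k-1$ for all $k>0$; with that normalization there are no $\bar p$--allowable simplices supported in $X^{n-1}$ at all, so the injectivity step and part of the surjectivity step become immediate, although the scheme above does not actually depend on this reduction.
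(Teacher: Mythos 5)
Your argument is correct and is essentially the paper's proof read in the opposite direction: the paper builds the map $f:I^{\bar p}C_*^c(X;R_0)\to S^{\bar p}C_*^c(X,X_{\bar t-\bar p})$ that your lift realizes, and its verification that the ``extra'' boundary pieces live in $AC_*^{\bar p+1}(X_{\bar t-\bar p})$ is exactly your combinatorial face lemma (a $\bar p$-allowable chain supported in $X^{n-1}$ is $(\bar p+1)$-allowable with support in $X_{\bar t-\bar p}$). Your explicit identification $C_*^c(X;R_0)\cong C_*^c(X,X^{n-1})$ is a nice clarification but does not change the substance of the argument.
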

\begin{proof}
We fix the ground ring as $R$ throughout, occasionally leaving it tacit.

We will first construct a homomorphism $f: I^{\bar p}C_*^c(X;R_0) \to S^{\bar p}C_*^c(X,X_{\bar t-\bar p})$. Then we will show that $f$ is an isomorphism for each fixed degree. Lastly, we show that $f$ is in fact a chain map.

To define $f$, let $\xi\in I^{\bar p}C_i^c(X;R_0)$. Then, by definition, $\xi\in A^{\bar p}C_i(X;R)$. Furthermore, recall that $\bd \xi\in  I^{\bar p}C_{i-1}^c(X;R_0)$ can be described by taking the boundary of $\xi$ in $C_*(X;R)$ and then setting the coefficients to all simplices with support in $X^{n-1}$ to $0$. The remaining simplices of $\bd \xi$  must be $\bar p$-allowable. But this means that in $ C_{i-1}(X;R)$, $\bd \xi=\zeta+\eta$, where $\zeta\in A^{\bar p}_{i-1}(X;R)$ and $\eta\in C_*(X^{n-1};R)$. We claim that, in fact, $\eta\in  AC_{*-1}^{\bar p+1}(X_{\bar t-\bar p})$. Let $S\subset X_{n-k}$, $k>0$, be a stratum such that $|\eta|\cap S\neq \emptyset$. Then $\dim(|\eta|\cap S)\leq \dim(|\xi|\cap S)\leq i-k+\bar p(S)=i-1-k+\bar p(S)+1$. Thus, $\eta$ is $\bar p+1$ allowable in  $X^{n-1}$. Furthermore, suppose that $S\not \subset X_{\bar t-\bar p}$ (continuing to assume $S\subset X_{n-k}$). Then $\bar t(S)-\bar p(S)=k-2-\bar p(S)\geq 0$, so $\bar p(S)\leq k-2$. Thus $\dim(|\xi|\cap S)\leq i-k+k-2=i-2$. Thus the interior of no simplex of $\eta$ can lie in $S$. So, the interiors of the simplices of $\eta$ must be in $X_{\bar t-\bar p}$, and thus $|\eta|\subset X_{\bar t-\bar p}$, since $X_{\bar t-\bar p}$ is closed. So $\eta\in A^{\bar p+1}C_*(X_{\bar t-\bar p})$. It follows that $\xi$ represents an element of $S^{\bar p}C_*^c(X,X_{\bar t-\bar p})$. 

This assignment taking $\xi$ to an element of  $S^{\bar p}C_*^c(X,X_{\bar t-\bar p})$ is clearly additive, so this defines our homomorphism $f$. 

Suppose $\xi \in I^{\bar p}C_*(X;R_0)$ and $f(\xi)=0$. Then $f(\xi)\in AC_*^{\bar p+1}(X_{\bar t-\bar p})\cap \bd^{-1}AC_{*-1}^{\bar p+1}(X_{\bar t-\bar p})\subset C_*(X^{n-1})$, and so $\xi$ is $0$ in $I^{\bar p}C_*(X;R_0)$. Thus $f$ is injective. 

On the other hand, suppose $z\in S^{\bar p}C_*^c(X,X_{\bar t-\bar p})$. We can represent $z$ by $z=x+y$, where $x\in A^{\bar p}C_*(X)$ and $y\in AC_*^{\bar p+1}(X_{\bar t-\bar p})$. We claim that we also have $\bd y\in AC_{*-1}^{\bar p+1}(X_{\bar t-\bar p})$. Certainly $\bd y\in C_{*-1}(X_{\bar t-\bar p})$, so the issue is just the allowability. We know that $\bd z=\bd x+\bd y\in A^{\bar p}C_{*-1}(X)+AC_{*-1}^{\bar p+1}(X_{\bar t-\bar p})$, so, in particular, it is $\bar p+1$ allowable. Since $x$ is $\bar p$ allowable, for any stratum $S\subset X_{n-k}$, we have $\dim(|\bd x|\cap S)\leq \dim(|x|\cap S)\leq i-k+\bar p(S)=i-1-k+\bar p(S)+1$, so that $\bd x$ is $\bar p+1$ allowable. Thus it follows that $\bd y=\bd z-\bd x$ is $\bar p+1$ allowable. So $\bd y \in AC_{*-1}^{\bar p+1}(X_{\bar t-\bar p})$. So, $y\in AC_*^{\bar p+1}(X_{\bar t-\bar p})\cap \bd^{-1}AC_{*-1}^{\bar p+1}(X_{\bar t-\bar p})$. It follows that $z$ and $x$ represent the same element of $S^{\bar p}C_*^c(X,X_{\bar t-\bar p})$. Now, $x\in A^{\bar p}C_*(X)$, and we know that $\bd x=\bd z-\bd y\in A^{\bar p}C_{*-1}(X)+AC_{*-1}^{\bar p+1}(X_{\bar t-\bar p})$, which implies that if we set to zero the coefficients of the simplices of $\bd x$ with support in $X^{n-1}$ (and, in particular, in $X_{\bar t-\bar p}$), then what remains will be $\bar p$ allowable. So $x$ fits the description of a chain in  $I^{\bar p}C_*(X;R_0)$.   Thus $f$ is surjective.

Finally, to see that $f$ is a chain map, consider $\xi\in I^{\bar p}C_*(X,R_0)$. As noted above, we can write $\bd \xi$ in $C_*(X;R_0)$ as $\bd \xi=\zeta+\eta$, and $\zeta\in A^{\bar p}C_{i-1}(X;R)$ represents $\bd \xi$ in $I^{\bar p}C_*(X;R_0)$. So $f(\bd \xi)$ is represented by $\zeta$. On the other hand, we have $\bd f(\xi)$ represented by $\zeta+\eta$. But we have also seen that $\eta\in AC_{*-1}^{\bar p+1}(X_{\bar t-\bar p})$. So the decomposition $\bd \xi=\zeta+\eta$ has the same form as the decomposition $z=x+y$ consided in the last paragraph, and thus by those arguments we know that $\zeta+\eta$ and $\zeta$ represent the same element of $S^{\bar p}C_*(X,X_{\bar t-\bar p})$.  Thus $f$ is a chain map.

\end{proof}

\bibliographystyle{amsplain}
\bibliography{bib}

Several diagrams in this paper were typeset using the \TeX\, commutative
diagrams package by Paul Taylor.

\end{document}